 \newcommand{\bis}{\mathrel{\mathchoice%
{\raisebox{.3ex}{$\,
  \underline{\makebox[.7em]{$\leftrightarrow$}}\,$}}%
{\raisebox{.3ex}{$\,
  \underline{\makebox[.7em]{$\leftrightarrow$}}\,$}}%
{\raisebox{.2ex}{$\,
  \underline{\makebox[.5em]{\scriptsize$\leftrightarrow$}}\,$}}%
{\raisebox{.2ex}{$\,
  \underline{\makebox[.5em]{\scriptsize$\leftrightarrow$}}\,$}}}}
 \newcommand{\lr}[1]{\langle #1 \rangle}
\newcommand{\lra}{\leftrightarrow}
\newcommand{\I}{\ensuremath{\textit{I}}}
\newcommand{\M}{\ensuremath{\mathcal{M}}}
\newcommand{\BP}{\textbf{P}}
\renewcommand{\phi}{\varphi}
\newcommand{\weg}[1]{}
\theoremstyle{definition}
\newtheorem{theorem}{Theorem}
\newtheorem{lemma}[theorem]{Lemma}
\newtheorem{definition}[theorem]{Definition}
\newtheorem{remark}[theorem]{Remark}
\newtheorem{proposition}[theorem]{Proposition}
\newtheorem{fact}[theorem]{Fact}
\title{Logics of False Belief and Radical Ignorance}
\author{Jie Fan\\
\small Institute of Philosophy, Chinese Academy of Sciences;\\
\small School of Humanities, University of Chinese Academy of Sciences  \\
\small \texttt{jiefan@ucas.ac.cn}}
\date{}
\begin{document}

\maketitle

\begin{abstract}

In the literature, the question about how to axiomatize the transitive logic of false belief is thought of as hard and left as an open problem.
In this paper, among other contributions, we deal with this problem. In more details, although the standard doxastic operator is undefinable with the operator of false belief, the former is {\em almost definable} with the latter. On one hand, the involved almost definability schema guides us to find the desired core axioms for the transitive logic and the Euclidean logic of false belief. On the other hand, inspired by the schema and other considerations, we propose a suitable canonical relation, which can uniformly handle the completeness proof of various logics of false belief, including the transitive logic. We also extend the results to the logic of radical ignorance, due to the interdefinability of the operators of false belief and radical ignorance.
\end{abstract}

\noindent Keywords: false belief, radical ignorance, axiomatizations, expressivity, frame definability

\section{Introduction}

The prime motivation of this paper is to deal with an open problem about how to axiomatize the transitive logic of false belief.

The discussion about false belief dates back to Plato~\cite[Sec.~7]{sep-plato-theaetetus}. This notion is related to a well-known distinction between knowledge and belief: knowledge must be true, but belief can be false, that is, there are false beliefs. Besides, this notion is popular in the field of cognitive science, see e.g.~\cite{Onishietal:2005}.

The first logical study on false belief is done by Steinsvold in~\cite{Steinsvold:falsebelief}, where a logic of false belief is proposed that has the operator $W$ as a sole primitive modality. There, $W\phi$ is read ``one is wrong about $\phi$'', or ``$\phi$ is a false belief'', meaning that $\phi$ is false though believed. This logic is axiomatized over the class of all frames and over various frame classes, and some results of frame definability are presented. The logic is then interpreted over neighborhood semantics in~\cite{GilbertVenturi:2017} and~\cite{Fan:2022neighborhood}, and analyzed in the intuitionistic setting in~\cite{Witczak:2022}.

Although both~\cite[Sec.~5]{Steinsvold:falsebelief} and~\cite[Sec.~2.4]{GilbertVenturi:2017} spend a whole section on discussing  the problem about how to axiomatize the transitive logic of false belief, they think it is difficult. To our best knowledge, this open  problem has not been solved until now. Two difficulties are involved. One is how to find the desired core axiom, and the other is how to define a suitable canonical relation.

In this paper, we will show that although the standard doxastic operator is undefinable with the operator of false belief, the former is almost definable with the latter. On one hand, the involved almost definability schema guides us to find the desired core axioms for the transitive logic and the Euclidean logic of false belief. On the other hand, inspired by the schema and other considerations, we propose a suitable canonical relation, which can uniformly handle the completeness proof of various logics of false belief, including the transitive logic, thereby solving the open problem in~\cite{Steinsvold:falsebelief}.



Moreover, we extend the results to the logic of radical ignorance. The notion of radical ignorance is proposed in~\cite{Fano:2020working}, to (hopefully) adequately express the important properties in the phenomenon of the Dunning–Kruger effect. This notion is formalized by using a {\bf KT4-B4} framework, in which the epistemic accessibility relation $R_K$ is reflexive and transitive, the doxastic accessibility relation $R_B$ is transitive, and $R_B$ is included in $R_K$. An agent is radically ignorant about $\phi$ iff the agent does not know $\phi$ and also does not know $\neg\phi$,\footnote{The original wording on page 611 of~\cite{Fano:2020working} is ``the agent does not know both $\phi$ and $\neg\phi$'', where there is a danger of misunderstanding because of a scope ambiguity.} and either the agent believes $\phi$ but it is the case that $\neg\phi$, or it is the case that $\phi$ but the agent believes $\neg\phi$; in symbol, $I_R\phi=_{df}((\neg K\phi\land\neg K\neg \phi)\land((B\phi\land\neg \phi)\vee (B\neg \phi\land\phi)))$, where $K$ and $B$ are standard opeartors of knowledge and belief, resepctively. Under very natural assumptions, namely $K\phi\to \phi$, $K\phi\to B\phi$ and $\neg(B\phi\land B\neg\phi)$, the definition can be simplified to the following: $I_R\phi=_{df}((B\phi\land\neg\phi)\vee (B\neg\phi\land\phi))$.\footnote{There is an error on page 611 of~\cite{Fano:2020working}: it says this simplification can be done in the framework {\bf KT4-B4}, but the serial axiom for the belief operator, viz. $\neg(B\phi\land B\neg\phi)$, is lacking. So it is {\bf KT4-BD4} (instead of {\bf KT4-B4}) frames, where the doxastic accessibility relation is serial as well, that are the frames which the framework of the cited paper is actually based on.\label{fn.remark}} However, there have been no formal systems characterizing the notion of radical ignorance.

A related work in the literature is the investigation on the notion of reliable belief in~\cite{Fan:2022sane}, with different motivations though. Note that the operator $\mathcal{R}$ of reliable belief is the negation of $I_R$, since $\mathcal{R}\phi$ is equivalent to $(B\phi\to\phi)\land (B\neg\phi\to\neg\phi)$. The minimal logic and the serial logic of reliable belief are axiomatized there. However, the canonical model there does not apply to the transitive logic of reliable belief, thus not to the transitive logic of radical ignorance either.

As we will see, the operators of radical ignorance and false beliefs are interdefinable with each other. This may indicate that one can translate the results about false belief into those about radical ignorance via the translation induced by the definability of the operator $W$ in terms of the operator of radical ignorance. Unfortunately, this holds for all but the minimal proof system.\footnote{Note that this is not new. Even if two operators are interdefinable with each other, it is {\em not} necessary that the axiomatizations of the logic with one operator as a sole modality can be obtained from those of the logic with the other operator as a sole modality via the translation induced by the interdefinability of the operators. For instance, although the necessity operator $\Box$ and the dyadic contingency operator are interdefinable with each other, the serial logic of dyadic contingency cannot be obtained from the serial system {\bf KD} via the translation induced by the interdefinability of the operators, see~\cite[p.~214]{Fan:2023}.} We will illustrate this with the axiomatizations of the logic of radical ignorance over all frames and over serial and transitive frames.


The remainder of this paper is organized as follows. After introducing the language and semantics of the logic of false belief, we propose an almost definability schema (Sec.~\ref{sec.syntaxandsemantics}). Then we compare the expressive powers of the logic of false belief and standard doxastic logic, and investigate the frame definability of the former (Sec.~\ref{sec.expressivityandframedefinability}). Sec.~\ref{sec.axiomatizations} axiomatizes the logic of false belief over various frame classes, including the transitive logic (Sec.~\ref{sec.transitivelogic}) specially, thereby solving an open problem raised in~\cite{Steinsvold:falsebelief}. The canonical relation here is inspired by the aforementioned almost definability schema and other considerations. Moreover, the desired core axioms for the transitive logic and the Euclidean logic of false belief are obtained from the familiar axioms via a translation induced by the almost definability schema. Last but not least, we axiomatize the logic of radical ignorance (Sec.~\ref{sec.radicalignorance}). After briefly reviewing the language and semantics of the logic of radical ignorance, we note that the operators of radical ignorance and of false beliefs are interdefinable with each other. Then we axiomatize the minimal logic (Sec.~\ref{sec.minimallogic}) and the serial and transitive logic (Sec.~\ref{sec.d4-radicalignornce}) of radical ignorance.
 

\medskip

\section{False belief}

\subsection{Syntax and Semantics}\label{sec.syntaxandsemantics}

Throughout the paper, we assume $\BP$ to be a nonempty set of propositional variables. We first define a logical language including both false belief and belief operators. The language of the standard doxastic logic and the language of the logic of false belief can be viewed as two fragment of this language. It is the latter that is our main focus in the rest of the paper.  
\begin{definition}
The language $\mathcal{L}(W,\Box)$ is defined recursively as follows.
\[
\begin{array}{lll}
   \phi &::= & p\mid \neg\phi\mid (\phi\land\phi)\mid W\phi\mid \Box\phi, \\
\end{array}
\]
where $p\in \BP$. The {\em language of the logic of false belief}, denoted $\mathcal{L}(W)$, is the fragment of $\mathcal{L}(W,\Box)$ without the construct $\Box\phi$. The {\em language of standard doxastic logic}, denoted $\mathcal{L}(\Box)$, is the fragment of $\mathcal{L}(W,\Box)$ without the construct $W\phi$.
\end{definition}

Other connectives are defined as usual. The formula $W\phi$ is read ``$\phi$ is a false belief of the agent'', or ``the agent is wrong about $\phi$''.\footnote{In a deontic setting, $W\neg\phi$ (that is, $\Box\neg\phi\land\phi$) is read ``$\phi$ ought not to be done but done'', which expresses some kind of vice: one did what one ought not to have done. In a metaphysical setting, $W\neg\phi$ is read ``$\phi$ is strongly accidental'', c.f.~\cite{PanYang:2017}.} The language is interpreted on models.
\begin{definition}\label{def.structure}
    A {\em model} is a triple $\M=\lr{S,R,V}$, where $S$ is a nonempty set of states, $R$ is a binary relation over $S$, called `accessibility relation', and $V$ is a valuation. A {\em frame} is a model without valuations; in this case, we also say that the model is based on the frame. We use ${\sim}sRt$ to mean that ``it does not hold that $sRt$''.
\end{definition}

\begin{definition}[Semantics]\label{def.semantics}
Given a model $\M=\lr{S,R,V}$ and a state $s\in S$, the semantics of $\mathcal{L}(W,\Box)$ is defined recursively as follows.
\[
\begin{array}{|lll|}
\hline
    \M,s\vDash p & \text{iff} & s\in V(p)\\
    \M,s\vDash\neg\phi & \text{iff} & \M,s\nvDash\phi\\
    \M,s\vDash\phi\land\psi&\text{iff} & \M,s\vDash\phi\text{ and }\M,s\vDash\psi\\
    
    \M,s\vDash W\phi & \text{iff} & \M,s\nvDash\phi\text{ and for all }t, \text{ if }sRt \text{ then }\M,t\vDash\phi\\ 
    \M,s\vDash\Box\phi & \text{iff} & \text{for all }t, \text{ if }sRt \text{ then }\M,t\vDash\phi.\\
\hline
\end{array}
\]

If $\M,s\vDash\phi$, we say that $\phi$ is {\em true} in $(\M,s)$, and sometimes write $s\vDash\phi$ if $\M$ is clear. If for all frames $\mathcal{F}$ in $F$, for all models $\M$ based on $\mathcal{F}$, for all $s$ in $\M$ we have $\M,s\vDash\phi$, then we say that $\phi$ is {\em valid on $F$} and write $F\vDash\phi$; when $F$ is the class of all frames, we say $\phi$ is {\em valid} and write $\vDash\phi$. The notions for a set of formulas are defined similarly.
\end{definition}

From the above semantics, it follows easily that $W$ is definable in terms of $\Box$, as $\vDash W\phi\lra (\Box\phi\land\neg\phi)$. In contrast, as we will show in Prop.~\ref{prop.expressivity-lw}, the converse does not hold, since $\Box$ is {\em not} definable in terms of $W$ in various classes of models.

Let $R(s)=\{t\mid sRt\}$. The semantics of $W$ can be rewritten as follows:
\[
\begin{array}{lll}
 \M,s\vDash W\phi & \text{iff} & \M,s\nvDash\phi\text{ and }R(s)\vDash\phi.\\
\end{array}
\]

Although $\Box$ is undefinable with $W$, we have the following important schema, which says that $\Box$ is {\em almost definable} in terms of $W$. We call it `Almost Definability Schema'.

\begin{proposition}\label{prop.almost-definability-schema}
$\vDash W\psi\to (\Box\phi\lra W(\phi\land\psi)).$
\end{proposition}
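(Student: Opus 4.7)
The plan is to prove the schema by a direct semantic argument: fix an arbitrary model $\M=\lr{S,R,V}$ and a state $s\in S$, assume $\M,s\vDash W\psi$, and then show the two directions of the biconditional $\Box\phi\lra W(\phi\land\psi)$ by unfolding the clauses of Def.~\ref{def.semantics}. The assumption $\M,s\vDash W\psi$ breaks into two useful pieces: (i) $\M,s\nvDash\psi$, and (ii) $R(s)\vDash\psi$, i.e.\ every $R$-successor of $s$ satisfies $\psi$. These are exactly the two ingredients needed to mediate between $\Box\phi$ and $W(\phi\land\psi)$.

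For the left-to-right direction, I would assume $\M,s\vDash\Box\phi$, so every successor of $s$ satisfies $\phi$; combined with (ii), every successor satisfies $\phi\land\psi$. To get $\M,s\vDash W(\phi\land\psi)$ it remains to show $\M,s\nvDash\phi\land\psi$, and this is immediate from (i). For the right-to-left direction, I would assume $\M,s\vDash W(\phi\land\psi)$, which immediately gives that every successor satisfies $\phi\land\psi$, hence $\phi$; thus $\M,s\vDash\Box\phi$. Note that in this direction the assumption $W\psi$ is not even needed, but since the schema is stated as an implication this is harmless.

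There is essentially no obstacle here: the proof is a short unfolding of definitions, and the role of the hypothesis $W\psi$ is precisely to supply the falsity witness at $s$ (via clause (i)) that $W(\phi\land\psi)$ requires but $\Box\phi$ does not. The only thing worth flagging is the asymmetry just mentioned, which is what makes $\Box$ merely \emph{almost} definable from $W$ rather than outright definable: without some external falsity witness, the $W$-operator cannot reproduce the universal clause of $\Box$ at states where $\phi$ itself already holds.
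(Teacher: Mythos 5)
Your proof is correct and follows essentially the same route as the paper's: unfold the semantics, use $\M,s\nvDash\psi$ and $R(s)\vDash\psi$ from the hypothesis $W\psi$ for the left-to-right direction, and observe that the right-to-left direction needs only $R(s)\vDash\phi\land\psi$. Your remark that $W\psi$ is not needed in the converse direction is accurate and matches the paper's argument.
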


\begin{proof}
    Let $\M=\lr{S,R,V}$ be a model and $s\in S$. Suppose that $\M,s\vDash W\psi$, to show that $\M,s\vDash\Box\phi\lra W(\phi\land\psi)$.

    First, assume that $\M,s\vDash \Box\phi$, to prove that $\M,s\vDash W(\phi\land\psi)$. By assumption, we infer that $R(s)\vDash \phi$. By supposition, we have $\M,s\nvDash \psi$ and $R(s)\vDash\psi$, thus $\M,s\nvDash\phi\land\psi$ and $R(s)\vDash\phi\land\psi$. This implies that $\M,s\vDash W(\phi\land\psi)$.

    Conversely, assume that $\M,s\vDash W(\phi\land\psi)$, then $R(s)\vDash \phi\land\psi$. So $R(s)\vDash\phi$, and therefore $\M,s\vDash \Box\phi$.
\end{proof}

This schema is very important, since it not only guides us to find out the desired core axioms of transitive logic and Euclidean logic for $\mathcal{L}(W)$, it also motivates the canonical relation in
the construction of canonical model for $\mathcal{L}(W)$. With this relation we can show the completeness of all axiomatizations uniformly, as we will see below.

\subsection{Expressivity and Frame Definability}\label{sec.expressivityandframedefinability}

In this part, we investigate the expressive power and frame definability of $\mathcal{L}(W)$. To begin with, we have the following useful observation, which follows directly from the semantics of $W$.
\begin{fact}\label{fact.imp}
For all $\phi$, $W\phi$ is false in each reflexive state.    
\end{fact}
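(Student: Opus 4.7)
The plan is to prove Fact~\ref{fact.imp} directly from the semantics of $W$ given in Definition~\ref{def.semantics}, by deriving a contradiction from the two conjuncts that make $W\phi$ true.

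First, I would fix an arbitrary model $\M=\lr{S,R,V}$ and a reflexive state $s\in S$, meaning $sRs$. For an arbitrary formula $\phi$, I would suppose toward contradiction that $\M,s\vDash W\phi$. Unpacking the semantics, this supposition gives two facts at once: $\M,s\nvDash\phi$, and for every $t$ with $sRt$ we have $\M,t\vDash\phi$.

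The key step is to instantiate the universal second conjunct at $t=s$, which is legitimate precisely because of reflexivity ($sRs$). This yields $\M,s\vDash\phi$, directly contradicting $\M,s\nvDash\phi$ from the first conjunct. Hence $\M,s\nvDash W\phi$, as required.

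There is really no obstacle here; the statement is an immediate consequence of the fact that the truth clause for $W\phi$ simultaneously asserts $\phi$ at every accessible world and denies $\phi$ at the current world, which is inconsistent as soon as the current world is itself accessible. The only thing worth noting is that the observation is schematic in $\phi$, so the proof proceeds uniformly without any induction on formula complexity.
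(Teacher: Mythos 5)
Your proof is correct and is exactly the argument the paper has in mind: the paper states the fact as following ``directly from the semantics of $W$'', and your instantiation of the universal clause at the reflexive state $s$ itself, contradicting $\M,s\nvDash\phi$, is that direct argument spelled out. Nothing is missing.
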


\begin{proposition}\label{prop.reflexive}
For any reflexive frames $\mathcal{F}$ and $\mathcal{F}'$, for any $\phi\in\mathcal{L}(W)$, $\mathcal{F}\vDash\phi$ iff $\mathcal{F}'\vDash\phi$.
\end{proposition}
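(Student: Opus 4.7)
The plan is to exploit Fact~\ref{fact.imp} to collapse every $W$-subformula to $\bot$ on any reflexive frame, reducing validity of $\mathcal{L}(W)$-formulas on reflexive frames to propositional validity, which is independent of the frame.

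First I would introduce a translation $t:\mathcal{L}(W)\to\mathcal{L}(W)$ defined by recursion: $t(p)=p$, $t(\neg\phi)=\neg t(\phi)$, $t(\phi\land\psi)=t(\phi)\land t(\psi)$, and crucially $t(W\phi)=\bot$. Note that $t(\phi)$ contains no occurrence of $W$ and is, modulo the primitive vocabulary, a purely Boolean combination of propositional variables.

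Next I would prove by induction on $\phi\in\mathcal{L}(W)$ that for every \emph{reflexive} model $\M=\lr{S,R,V}$ and every $s\in S$, $\M,s\vDash\phi$ iff $\M,s\vDash t(\phi)$. The atomic and Boolean cases are immediate from the definition of $t$ and the semantics. The only nontrivial case is $\phi=W\psi$: by Fact~\ref{fact.imp}, $W\psi$ is false at every reflexive state, so $\M,s\nvDash W\psi$, and by definition $t(W\psi)=\bot$, which is also false at $s$; hence both sides agree. Therefore $\mathcal{F}\vDash\phi$ iff $\mathcal{F}\vDash t(\phi)$ for every reflexive frame $\mathcal{F}$.

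Finally, since $t(\phi)$ is a Boolean formula, its truth at a state depends only on the valuation of the propositional variables occurring in it, and not on the accessibility relation. Consequently $\mathcal{F}\vDash t(\phi)$ holds iff $t(\phi)$ is a propositional tautology, which is a condition on $t(\phi)$ alone and does not mention $\mathcal{F}$ at all. Applying the previous step to both $\mathcal{F}$ and $\mathcal{F}'$ yields $\mathcal{F}\vDash\phi$ iff $t(\phi)$ is a propositional tautology iff $\mathcal{F}'\vDash\phi$, as required.

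I do not expect a serious obstacle: the whole argument hinges on the single fact that $W$ is vacuously refuted at reflexive points, so the only care needed is to set up the translation correctly and to handle the inductive case for $W$ cleanly.
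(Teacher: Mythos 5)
Your proof is correct, and it takes a somewhat different route from the paper's. The paper argues by transferring a countermodel directly: given $\mathcal{F}\nvDash\phi$ witnessed at a point $s$, it copies the valuation of $s$ onto a single point $s'$ of $\mathcal{F}'$ and proves by induction on formulas that the two pointed models $\lr{\mathcal{F},V},s$ and $\lr{\mathcal{F}',V'},s'$ agree on all of $\mathcal{L}(W)$, the $W$-case being handled by Fact~\ref{fact.imp} since both points are reflexive. You instead internalize Fact~\ref{fact.imp} into a translation $t$ with $t(W\psi)=\bot$, prove that $t$ is truth-preserving at every point of every reflexive model, and then observe that validity of the purely Boolean formula $t(\phi)$ on a (nonempty) frame is equivalent to $t(\phi)$ being a propositional tautology, which is frame-independent. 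Both arguments rest on the same single fact; the difference is that the paper's induction is local (it only compares two states and never characterizes validity), whereas your reduction yields a stronger by-product, namely that over reflexive frames the $\mathcal{L}(W)$-validities are exactly the formulas whose $W$-free translation is a tautology --- which is essentially the semantic content behind the system ${\bf T^W}={\bf K^W}+\neg W\phi$ axiomatized later in the paper. The only points requiring care in your version, the nonemptiness of the state set (needed to pass from ``valid on $\mathcal{F}$'' to ``tautology'') and the availability of $\bot$ as a defined connective, are both guaranteed by the paper's definitions, so there is no gap.
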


\begin{proof}
Let $\mathcal{F}=\lr{S,R}$ and $\mathcal{F}'=\lr{S',R'}$ be reflexive frames, and let $\phi\in\mathcal{L}(W)$.

Suppose that $\mathcal{F}\nvDash \phi$, then there is a valuation $V$ and a state $s$ such that $\lr{F,V},s\nvDash\phi$. Since $S'\neq \emptyset$, we may assume that $s'\in S$. Define a valuation $V'$ on $\mathcal{F}'$ as follows: $s'\in V'(p)$ iff $s\in V(p)$ for all $p\in\BP$. Since $\mathcal{F}$ and $\mathcal{F}'$ are both reflexive, both $s$ and $s'$ are reflexive. By Fact~\ref{fact.imp}, this means that all $W\phi$ are false in both states. Then by induction on $\psi\in\mathcal{L}(W)$, we can show that $\lr{\mathcal{F},V},s\vDash\psi$ iff $\lr{\mathcal{F}',V'},s'\vDash\psi$. Hence $\lr{\mathcal{F}',V'},s'\nvDash\psi$, and therefore $\mathcal{F}'\nvDash\phi$. The converse is similar.
\end{proof}

It turns out that $\mathcal{L}(W)$ is less expressive than $\mathcal{L}(\Box)$ on various classes of models.
\begin{proposition}\label{prop.expressivity-lw}
$\mathcal{L}(W)$ is less expressive than $\mathcal{L}(\Box)$ on the class of $\mathcal{S}5$-models. As a consequence, $\mathcal{L}(W)$ is less expressive than $\mathcal{L}(\Box)$ on the class of $\mathcal{K}$-models, $\mathcal{D}$-models, $\mathcal{T}$-models, 4-models, $\mathcal{B}$-models, 5-models, $\mathcal{D}4$-models, $\mathcal{D}45$-models.
\end{proposition}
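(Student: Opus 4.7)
The plan is to establish the upper bound and strict separation separately. The upper bound is immediate: from the validity $\vDash W\phi\lra(\Box\phi\land\neg\phi)$ noted just after Definition~\ref{def.semantics}, every $\mathcal{L}(W)$-formula is equivalent (on every class of models) to an $\mathcal{L}(\Box)$-formula, so $\mathcal{L}(W)$ is at most as expressive as $\mathcal{L}(\Box)$ on any model class, including all classes listed in the statement. The work, then, is to exhibit two pointed $\mathcal{S}5$-models that agree on every $\mathcal{L}(W)$-formula but disagree on some $\mathcal{L}(\Box)$-formula.

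For the separating pair I would take $\M_1=\lr{\{s_1\},\{(s_1,s_1)\},V_1}$ with $V_1(p)=\{s_1\}$, and $\M_2=\lr{\{s_2,t_2\},\{s_2,t_2\}\times\{s_2,t_2\},V_2}$ with $V_2(p)=\{s_2\}$. Both frames are equivalence relations, hence $\mathcal{S}5$. For the $\mathcal{L}(\Box)$-side, $\M_1,s_1\vDash\Box p$ trivially, while $\M_2,s_2\nvDash\Box p$ because $s_2Rt_2$ and $t_2\nvDash p$.

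For the $\mathcal{L}(W)$-side, I would prove by straightforward induction on $\psi\in\mathcal{L}(W)$ that $\M_1,s_1\vDash\psi$ iff $\M_2,s_2\vDash\psi$. The atomic case holds because $s_1\in V_1(p)$ and $s_2\in V_2(p)$ (and analogously for any other $q\in\BP$, which one handles by stipulating $V_1(q)=V_2(q)\cap\{s_2\}$ identified with $\{s_1\}$ or $\emptyset$). The Boolean cases are routine. The key case $W\psi$ is immediate from Fact~\ref{fact.imp}: both $s_1$ and $s_2$ are reflexive, so $W\psi$ is false at each of them, and both sides of the biconditional fail.

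For the consequence, the same pair works without change: since $\mathcal{S}5$-models are reflexive, symmetric, transitive, serial, and Euclidean, the models $\M_1$ and $\M_2$ also lie in the classes of $\mathcal{K}$-, $\mathcal{D}$-, $\mathcal{T}$-, $4$-, $\mathcal{B}$-, $5$-, $\mathcal{D}4$-, and $\mathcal{D}45$-models. Combined with the upper bound, this yields strict inclusion on each listed class. I do not foresee a genuine obstacle here; the essential content is already packaged in Fact~\ref{fact.imp}, which collapses all $W$-formulas to falsity on reflexive points, so the only care required is to make the two evaluation points propositionally indistinguishable while being $\Box$-distinguishable.
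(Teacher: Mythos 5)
Your proof is correct and follows essentially the same route as the paper: the upper bound via $\vDash W\phi\lra(\Box\phi\land\neg\phi)$, and the strict part via a one-point reflexive $p$-model versus a two-point $\mathcal{S}5$-model distinguished by $\Box p$, with the $\mathcal{L}(W)$-indistinguishability obtained by induction from Fact~\ref{fact.imp} at the two reflexive evaluation points, and the same observation that these models lie in all the listed classes.
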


\begin{proof}
As mentioned above, $W$ is definable in terms of $\Box$, thus $\mathcal{L}(\Box)$ is at least as expressive as $\mathcal{L}(W)$. For the strict part, consider the following $\mathcal{S}5$-models:

\[
\xymatrix{\M&s:p\ar@(ul,ur)&& \M'& s':p\ar@(ul,ur)\ar[r]&t':\neg p\ar[l]\ar@(ur,ul)\\}
\]

Using Fact~\ref{fact.imp}, we can show by induction that for all $\phi\in\mathcal{L}(W)$, $\M,s\vDash \phi$ iff $\M',s'\vDash \phi$. Thus $(\M,s)$ and $(\M',s')$ cannot be distinguished by $\mathcal{L}(W)$.

However, these two models can be distingished by $\mathcal{L}(\Box)$, since $\M,s\vDash\Box p$ but $\M',s'\nvDash\Box p$.
\end{proof}

It may be natural to ask if there is a class of frames where $\Box$ is definable in terms of $W$. The answer is positive. We borrow a notion of {\em narcissistic} from~\cite[Def.~2.1]{Steinsvold:falsebelief}. Call $s$ {\em narcissistic} if and only if $s$ relates to itself and only to itself. Call a frame {\em narcissistic} if all the worlds are narcissistic; that is,
$$\forall x(xRx\land \forall y(xRy\to x=y)).$$
\begin{proposition}
    On the class of narcissistic frames, $\Box$ is definable in terms of $W$. As a consequence, $\mathcal{L}(W)$ and $\mathcal{L}(\Box)$ are equally expressive on the class of narcissistic models.
\end{proposition}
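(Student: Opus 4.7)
The plan is to exploit how drastically narcissistic frames trivialize the two modal clauses. Since every state $s$ has $R(s)=\{s\}$, I get immediately from the semantics that $\M,s\vDash\Box\phi$ iff $\M,s\vDash\phi$ on any model based on a narcissistic frame. A parallel inspection shows that $\M,s\vDash W\phi$ would simultaneously require $\M,s\nvDash\phi$ and $\M,s\vDash\phi$ (the latter because $s$ is the only $R$-successor of itself), so $W\phi$ is equivalent to $\bot$ on such models --- a small strengthening of Fact~\ref{fact.imp}.

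To turn the first observation into a formal definability result, I would define a translation $t\colon \mathcal{L}(W,\Box)\to\mathcal{L}(W)$ by $t(p)=p$, letting $t$ commute with the Boolean connectives, setting $t(W\phi)=W\,t(\phi)$, and crucially $t(\Box\phi)=t(\phi)$. An easy induction on $\phi$ then yields $\M,s\vDash\phi$ iff $\M,s\vDash t(\phi)$ for every pointed narcissistic model $(\M,s)$. The only non-trivial induction step is the one for $\Box$, which is handled by the semantic observation above; every other case is literally definitional. Restricting $t$ to $\mathcal{L}(\Box)$ witnesses that $\Box$ is definable in terms of $W$ on the class of narcissistic frames.

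The consequence about equal expressivity is then almost free. The validity of $W\phi\lra (\Box\phi\land\neg\phi)$ on all frames already gives $\mathcal{L}(\Box)\geq\mathcal{L}(W)$ in expressive power, and the translation $t$ delivers the reverse inequality on narcissistic models. Composing the two translations (or, more bluntly, noting that both operators collapse and the whole language reduces to propositional logic on narcissistic models) gives the desired equi-expressivity.

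I do not anticipate any real obstacle: the narcissistic condition is so strong that the argument boils down to a one-line semantic computation plus a textbook induction on formula complexity. The only mild point of care is to phrase ``definable in terms of $W$'' in a way that allows the translated formula to contain no occurrences of $W$ at all (as will happen for every $\mathcal{L}(\Box)$-formula under $t$), which is harmless since such formulas still lie in $\mathcal{L}(W)$.
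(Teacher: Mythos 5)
Your proof is correct and rests on exactly the same key observation as the paper's: on narcissistic frames $\Box\phi\lra\phi$ is valid, so $\Box$ is already definable in propositional logic and hence in $\mathcal{L}(W)$. The translation $t$ and induction you add are just a more explicit spelling-out of what the paper dismisses as straightforward, so the approaches are essentially identical.
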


\begin{proof}
    Let $F_{nar}$ be the class of narcissistic frames. It is straightforward to verify that $F_{nar}\vDash \Box\phi\lra \phi$. This means that on the frame classes in question, $\Box$ is already definable in the language of propositional logic; needless to say, $\Box$ is definable in terms of $W$.
\end{proof}

\begin{remark}
In~\cite[Sect.~1.4]{Gilbertetal:2022}, the authors compare the expressive power of $\mathcal{L}(I)$ and $\mathcal{L}(\Box)$.\footnote{In~\cite{Gilbertetal:2022}, $\mathcal{L}(I)$ is the language of the logic of factive ignorance that has the operator $I$ of factive ignorance as a sole primitive modality. Boolean formulas are interpreted as usual, and $I\phi$ is interpreted as follows: given a model $\M=\lr{S,R,V}$ and a state $s\in S$, $\M,s\vDash I\phi$ iff $\M,s\vDash\phi$ and for all $t\in S$, if $sRt$ and $s\neq t$, then $\M,t\nvDash\phi$.} It turns out that neither of $I$ and $\Box$ is, in general, definable in terms of the other. In particular, it is shown in~\cite[Coro.~1.31]{Gilbertetal:2022} that the indefinability of $\Box$ in terms of $I$ applies to a wide variety of frame classes. In the meanwhile, the authors ask whether there exist any interesting classes of frames in which $\Box$ is definable in terms of $I$ and they think the answer is negative (see~\cite[p.~878]{Gilbertetal:2022}). However, the answer is actually positive, since on the class of narcissistic frames, $\Box$ is definable in terms of $I$. The same class of frames also establishes the definability of $I$ in terms of $\Box$, since as one may show, $F_{nar}\vDash I\phi\lra \phi$, where $F_{nar}$ is the class of narcissistic frames. 
\end{remark} 

The following result is shown in~\cite[Thm.~4.8]{Steinsvold:falsebelief}, where the proof is based on a canonical model. Here we give a much simpler proof, without need of canonical models.
\begin{proposition}\label{prop.frameundefinability}
The following frame properties are all undefinable in $\mathcal{L}(W)$:\footnote{To say a frame property P is {\em definable} in a logic $\mathcal{L}$, if there is a set $\Gamma$ of $\mathcal{L}$-formulas such that for all
frames $\mathcal{F}$, $\mathcal{F}\vDash \Gamma$ iff $\mathcal{F}$ has P.}
\begin{itemize}
    \item[(1)] Transitivity,
    \item[(2)] Euclideanness,
    \item[(3)] Symmetry,
    \item[(4)] weak connectedness $\forall x\forall y\forall z((xRy\land xRz)\to (yRz\vee y=z\vee zRy))$,
    \item[(5)] weak directedness $\forall x\forall y\forall z((xRy\land xRz)\to \exists v(yRv\land zRv))$,
    \item[(6)] partial functionality $\forall x\forall y\forall z((xRy\land xRz)\to z=y)$,
    \item[(7)] narcissism $\forall x(xRx\land \forall y(xRy\to x=y))$,
    \item[(8)] partial narcissism $\forall x \forall y(xRy\to x=y)$.  
\end{itemize}
\end{proposition}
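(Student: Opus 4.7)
The plan is to exploit Proposition~\ref{prop.reflexive}: since any two reflexive frames validate exactly the same $\mathcal{L}(W)$-formulas, a frame property $P$ is undefinable in $\mathcal{L}(W)$ as soon as one can exhibit a reflexive frame satisfying $P$ and a reflexive frame violating $P$. No set of $\mathcal{L}(W)$-formulas can then separate the two. I would establish each of (1)--(8) uniformly by this single pattern.

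For the positive witness I would always take the singleton reflexive frame $\mathcal{F}_1=\lr{\{s\},\{(s,s)\}}$; on this frame every one of (1)--(8) holds, since each universally quantified condition either reduces to the trivial instance $sRs\to sRs$ or has only the single assignment $x=y=z=s$ to consider (and the corresponding existential witnesses, e.g.\ for weak directedness, can be taken to be $s$ itself). For the negative witness I would choose, according to the property, a small reflexive frame violating it. For (2), (3), (6), (7) and (8), the two-point reflexive frame $\mathcal{F}_2=\lr{\{a,b\},\{(a,a),(b,b),(a,b)\}}$ works: it is reflexive, yet $aRa\land aRb$ together with the failure of $bRa$ (and the fact that $a\neq b$) refutes Euclideanness, symmetry, partial functionality, narcissism, and partial narcissism in turn. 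For (1) I would take the three-point reflexive frame obtained from $\{a,b,c\}$ by adding exactly $aRb$ and $bRc$; reflexivity is obvious and transitivity fails since $aRc$ is missing. For (4) and (5) I would take the three-point reflexive ``fan'' with only $aRb$ and $aRc$ added; here $b\neq c$ and neither $bRc$ nor $cRb$ holds, refuting weak connectedness, while the only $R$-successor of $b$ is $b$ itself and the only $R$-successor of $c$ is $c$ itself, so no common successor of $b,c$ exists, refuting weak directedness.

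Nothing in this argument is delicate. The substantive content is the preceding Proposition~\ref{prop.reflexive}, which collapses $\mathcal{L}(W)$-frame-definability to the question of separating two reflexive frames; the remaining work is a bookkeeping exercise in checking the failures on the eight chosen small reflexive frames. The only mildly awkward point, which is the main thing to watch, is to confirm that each of the eight properties really is satisfied trivially on the singleton reflexive frame (a few of them, like weak directedness, ask for an existential witness and so need a moment's thought) and that the proposed two- and three-point counterexamples are genuinely reflexive. Compared with the canonical-model argument of~\cite[Thm.~4.8]{Steinsvold:falsebelief}, this plan yields the result with no proof-theoretic machinery at all.
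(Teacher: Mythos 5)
Your proposal is correct and takes essentially the same route as the paper: both arguments rest entirely on Proposition~\ref{prop.reflexive}, taking the singleton reflexive frame as the witness satisfying all eight properties and a reflexive frame violating them as the separating counterexample. The only (inessential) difference is that the paper uses one fixed three-point reflexive frame that fails all of (1)--(8) simultaneously, whereas you tailor two- and three-point reflexive counterexamples to the individual properties.
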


\begin{proof}
Consider the following frames:

\[
\xymatrix{\mathcal{F}: & s\ar@(ul,ur) && \mathcal{F}': & u'\ar@(ul,ur)\ar[r] & s'\ar@(ul,ur)\ar[r]\ar[l] & t'\ar@(ur,ul)\\}
\]

One may check that for any frame property $P$ in (1)-(8), it is {\em not} the case that $\mathcal{F}$ has $P$ iff $\mathcal{F}'$ has $P$. However, since $\mathcal{F}$ and $\mathcal{F}'$ are both reflexive, by Prop.~\ref{prop.reflexive}, for all $\phi\in\mathcal{L}(W)$, we have that $\mathcal{F}\vDash \phi$ iff $\mathcal{F}'\vDash\phi$. This entails that the frame properties in question are all undefinable in $\mathcal{L}(W)$.
\end{proof}

\weg{\subsection{Bisimulation}

\begin{definition}[$W$-Bisimulation]
Let $\M=\lr{S,T,V}$ and $\M'=\lr{S',T',V'}$ be models. We say that $Z$ is a {\em $W$-bisimulation} between $\M$ and $\M'$, if for all $s\in S$ and $s'\in S'$, $sZs'$ implies the following conditions:
\begin{itemize}
    \item[(Var)] $s\in V(p)$ iff $s'\in V'(p)$ for all $p\in \BP$;
    \item[($W$-Zig1)] if $(s,s)\in T$, then $(s',s')\in T'$;
    \item[($W$-Zig2)] if $(s,s)\notin T$ and $sTt$, then there exists $t'\in S'$ such that $s'T't'$ and $tZt'$;
    \item[($W$-Zag1)] if $(s',s')\in T'$, then $(s,s)\in T$;
    \item[($W$-Zag2)] if $(s',s')\notin T'$ and $s'T't'$, then there exists $t\in S$ such that  $sTt$ and $tZt'$.
\weg{\item if $(s,s)\notin T$ or $(s',s')\notin T'$, then
\begin{itemize}
    \item[(1)] if $sTt$, then there exists $t'\in S'$ such that $s'T't'$ and $tZt'$;
    \item[(2)] if $s'T't'$, then there exists $t\in S$ such that $sTt$ and $tZt'$.
\end{itemize}}
\end{itemize}

We say that $(\M,s)$ and $(\M',s')$ is {\em $W$-bisimilar}, denoted $(\M,s)\bis_W(\M',s')$, if there is a $W$-bisimulation between $\M$ and $\M'$ such that $sZs'$.
\end{definition}

\begin{proposition}
$\bis_W$ is an equivalence relation.
\end{proposition}

\begin{proposition}\label{prop.invariance}[Invariance]
Let $\M=\lr{S,T,V}$ and $\M'=\lr{S',T',V'}$ be models. If $(\M,s)\bis_W(\M',s')$, then for all $\phi\in\mathcal{L}(W)$, we have
$$\M,s\vDash\phi\text{~iff~}\M',s'\vDash\phi.$$
\end{proposition}

\begin{proof}
Suppose that $(\M,s)\bis_W(\M',s')$. Then there is a $W$-bisimulation $Z$ between $\M$ and $\M'$ such that
$sZs'$. The proof proceeds by induction on $\phi$. The nontrivial case is $W\phi$.

Assume, for reductio, that $\M,s\vDash W\phi$ and $\M',s'\nvDash W\phi$. Then $\M,s\nvDash\phi$. By induction hypothesis, $\M',s'\nvDash\phi$, thus there is a $t'$ such that $s'T't'$ and $\M',t'\nvDash\phi$. From $\M,s\vDash W\phi$, by Fact~\ref{fact.imp}, it follows that $(s,s)\notin T$. By ($W$-Zag1), we infer that $(s',s')\notin T'$. Then by ($W$-Zag2), there exists $t\in S$ such that $sTt$ and $tZt'$. By induction hypothesis, we have $\M,t\nvDash\phi$. Thus $\M,s\nvDash W\phi$: a contradiction. The converse is similar.
\end{proof}}

\subsection{Axiomatizations}\label{sec.axiomatizations}
This section presents the axiomatizations of $\mathcal{L}(W)$ over various frame classes.

\subsubsection{Minimal Logic}\label{subsec.minimallogic}

\begin{definition}
The minimal logic of $\mathcal{L}(W)$, denoted ${\bf K^W}$, consists of the following axioms and inference rules:
\[
\begin{array}{ll}
   \text{A0}  & \text{all instances of propositional tautologies} \\
   \text{A1}  & W\phi\to\neg\phi\\
   \text{A2} & W\phi\land W\psi\to W(\phi\land\psi)\\
   \text{MP} & \dfrac{\phi~~~\phi\to\psi}{\psi}\\
   \text{R1} & \dfrac{\phi\to\psi}{W\phi\land\neg\psi\to W\psi}\\
\end{array}
\]
\end{definition}

The notions of theorems, provability, and derivation are defined as usual.

This system is called ${\bf S^W}$ in~\cite{Steinsvold:falsebelief}. Intuitively, axiom A1 says that false beliefs are false, A2 says that false beliefs are closed under conjunction: the conjunction of two false beliefs are still a false belief. The rule R1 stipulates the almost monotonicity of the false belief operator. It is shown in~\cite[Thm.~3.1]{Steinsvold:falsebelief} that the substituition rule of equivalents for the operator $W$, i.e. $\phi\lra\psi~/~W\phi\lra W\psi$, denoted $\text{REW}$, is admissible in ${\bf K^W}$. Moreover, we have the following.

\begin{proposition}\label{prop.imp}
The following rule is admissible in ${\bf K^W}$:
$$\dfrac{\phi\to\psi}{W(\phi\land\chi)\land\neg\psi\to W\psi}.$$
\end{proposition}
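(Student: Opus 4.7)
The plan is to reduce the target rule to a single application of R1, after weakening the premise using propositional reasoning. Given the hypothesis $\phi\to\psi$, since $\phi\land\chi\to\phi$ is a propositional tautology (instance of A0) and MP together with propositional reasoning yields transitivity of implication, we obtain $\phi\land\chi\to\psi$ as a derived formula in ${\bf K^W}$.

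Once $\phi\land\chi\to\psi$ is in hand, I would apply the rule R1 directly, taking $\phi\land\chi$ in the role of the antecedent formula and $\psi$ in the role of the consequent formula. This instance of R1 yields exactly $W(\phi\land\chi)\land\neg\psi\to W\psi$, which is the desired conclusion.

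There is essentially no obstacle here: the only point to be careful about is that R1 is schematic in its antecedent formula, so substituting $\phi\land\chi$ for the schematic letter is permitted. The role of A1, A2, or REW is not needed for this particular admissible rule; the argument is just propositional weakening of the premise followed by a single invocation of R1.
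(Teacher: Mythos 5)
Your proof is correct, and it is in fact a slightly more economical route than the one in the paper. You first weaken the hypothesis to the theorem $\phi\land\chi\to\psi$ (using the tautology $\phi\land\chi\to\phi$ and propositional reasoning) and then invoke R1 once, with $\phi\land\chi$ as the antecedent formula; since R1 is schematic and applies to any theorem of ${\bf K^W}$, nothing blocks this substitution, and the conclusion $W(\phi\land\chi)\land\neg\psi\to W\psi$ falls out immediately. The paper instead applies R1 twice --- once to the hypothesis $\phi\to\psi$, yielding $W\phi\land\neg\psi\to W\psi$, and once to $\phi\land\chi\to\phi$, yielding $W(\phi\land\chi)\land\neg\phi\to W\phi$ --- and then chains these two conclusions together by propositional logic, using the hypothesis once more to discharge the extra conjunct $\neg\phi$. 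Both arguments use exactly the same ingredients (A0, MP, R1, and neither needs A1, A2, or REW), so the difference is only in where the propositional work is done: you do it on the premise side before the single application of R1, while the paper does it on the conclusion side after two applications. Your version is shorter and serves equally well as the base case of the generalization in Proposition~\ref{prop.admissiblerule}, which does not depend on the internal structure of this proof.
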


\begin{proof}
We have the following proof sequence in ${\bf K^W}$.
\[
\begin{array}{lll}
   (1) & \phi\to\psi  & \text{hypothesis} \\
   (2) & W\phi\land\neg\psi\to W\psi  & (1),\text{R1}\\
   (3) & \phi\land\chi\to\phi & \text{A0}\\
   (4) & W(\phi\land\chi)\land\neg \phi\to W\phi & (3),\text{R1}\\
   (5) & W(\phi\land\chi)\land\neg\phi\land\neg\psi\to W\psi & (2),(4)\\
   (6) & W(\phi\land\chi)\land \neg\psi\to W\psi & (1),(5)\\
\end{array}
\]
\end{proof}

We can generalize the above proposition to the following.
\begin{proposition}\label{prop.admissiblerule} Let $n\in\mathbb{N}$.
If $\vdash \chi_1\land\cdots\land\chi_n\to\phi$, then $\vdash W(\chi_1\land\psi)\land\cdots\land W(\chi_n\land\psi)\land\neg\phi\to W\phi$.
\end{proposition}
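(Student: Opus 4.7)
The plan is to reduce the statement to Proposition~\ref{prop.imp} by first collapsing the $n$-fold conjunction of $W$-formulas into a single $W$-formula of the shape required by that proposition. Concretely, I will derive in ${\bf K^W}$ the implication
$$W(\chi_1\land\psi)\land\cdots\land W(\chi_n\land\psi)\to W((\chi_1\land\cdots\land\chi_n)\land\psi),$$
and then feed the right-hand side into Prop.~\ref{prop.imp}.

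For the first step, I would iterate axiom A2 a total of $n-1$ times to obtain
$$W(\chi_1\land\psi)\land\cdots\land W(\chi_n\land\psi)\to W((\chi_1\land\psi)\land\cdots\land(\chi_n\land\psi)).$$
Since $(\chi_1\land\psi)\land\cdots\land(\chi_n\land\psi)$ is propositionally equivalent to $(\chi_1\land\cdots\land\chi_n)\land\psi$, the admissibility of REW (noted in the paragraph just before Prop.~\ref{prop.imp}) lets me replace the argument of $W$ and yields the displayed implication.

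For the second step, I apply Prop.~\ref{prop.imp} with the following instantiation of the variables occurring there: the role of ``$\phi$'' is played by $\chi_1\land\cdots\land\chi_n$, the role of ``$\psi$'' by the current $\phi$, and the role of ``$\chi$'' by the current $\psi$. The hypothesis $\vdash\chi_1\land\cdots\land\chi_n\to\phi$ is precisely what the rule requires, and its conclusion becomes
$$W((\chi_1\land\cdots\land\chi_n)\land\psi)\land\neg\phi\to W\phi.$$
Chaining this with the implication from the first step gives the target.

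The degenerate case $n=0$ (where the antecedent conjunction is empty and the hypothesis reduces to $\vdash\phi$) deserves a separate line: from $\vdash\phi$ the formula $\neg\phi\to W\phi$ follows from A0 and MP alone. I do not anticipate any real obstacle here; the only labour is bookkeeping for the $n-1$ applications of A2 and the one invocation of REW, both of which are routine, so an induction on $n$ could equally well be used if one prefers to avoid the ``$n-1$ iterations'' phrasing.
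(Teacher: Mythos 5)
Your proof is correct and rests on essentially the same ingredients as the paper's: axiom A2 (together with the admissible rule REW for rebracketing) to merge the $W$-conjuncts, and Prop.~\ref{prop.imp} for the final step, with the degenerate case $n=0$ handled separately. The only difference is organizational --- the paper interleaves these in an explicit induction on $n$ (using Prop.~\ref{prop.imp} as the base case and A2 in each inductive step), whereas you first collapse all conjuncts into $W((\chi_1\land\cdots\land\chi_n)\land\psi)$ and then invoke Prop.~\ref{prop.imp} once --- which is an equally valid arrangement of the same argument.
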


\begin{proof}
    By induction on $n\in\mathbb{N}$. The case $n=0$ is obvious. The case $n=1$ is shown as in Prop.~\ref{prop.imp}.

    Now suppose that the statement holds for $m$ (IH), to show that it also holds for $m+1$. For this, we have the following proof sequence:
    \[
    \begin{array}{lll}
       (1) & \chi_1\land\cdots\land \chi_{m+1}\to \phi & \text{premise} \\
       (2) & (\chi_1\land\chi_2)\land\cdots\land \chi_{m+1}\to \phi & (1)\\
       (3) & W((\chi_1\land\chi_2)\land\psi)\land\cdots\land W(\chi_{m+1}\land\psi)\land \neg\phi\to W\phi & (2),\text{IH}\\
       (4) & W(\chi_1\land\psi)\land W(\chi_2\land\psi)\to W((\chi_1\land\chi_2)\land\psi) & \text{A2}\\
       (5) & W(\chi_1\land\psi)\land\cdots\land W(\chi_{m+1}\land\psi)\land\neg\phi\to W\phi & (3),(4)\\
    \end{array}
    \]
\end{proof}

The completeness of ${\bf K^W}$ is shown via the construction of a canonical model.
\begin{definition}\label{def.cm}
The {\em canonical model for ${\bf K^W}$} is $\M^c=\lr{S^c,R^c, V^c}$, where
\begin{itemize}
    \item $S^c=\{s\mid s\text{ is a maximal consistent set for }{\bf K^W}\}$,
    \item for all $s,t\in S^c$, $R^c$ is defined as follows: 
    \begin{itemize}
        \item if $W\psi\in s$ for {\em no} $\psi$, then $sR^ct$ iff $s=t$, and
        \item if $W\psi\in s$ for {\em some} $\psi$, then $sR^ct$ iff for all $\phi$, if $W(\phi\land\psi)\in s$, then $\phi\in t$.
    \end{itemize}
\item $V^c(p)=\{s\in S^c\mid p\in s\}$.
\end{itemize}
\end{definition}

It is worth noting that the above definition of $T^c$ is inspired by Almost Definability Schema (Prop.~\ref{prop.almost-definability-schema}). Recall that in the construction of the canonical model of standard doxastic logic, the canonical relation $R^c$ is usually defined as follows: $sR^ct$ iff for all $\phi$, if $\Box\phi\in s$, then $\phi\in t$. According to Almost Definability Schema, $\Box\phi\in s$ can be replaced by $W(\phi\land\psi)\in s$ provided that $W\psi\in s$ for some $\psi$. This is similar to the case for minimal contingency logic~\cite{Fanetal:2015}. 

However, unlike the case for minimal contingency logic~\cite{Fanetal:2015}, here ``$W\psi\in s$ for some $\psi$'' should be a precondition, instead of a conjunction, of the aforementioned replacement.\footnote{In other words, in the case of $W\psi\in s$ for some $\psi$, we replace $\Box\phi\in s$ with $W(\phi\land\psi)\in s$ in the definition of the canonical relation of the canonical model for standard doxastic logic.} Moreover, if this precondition is not satisfied, then $s$ can and only can access itself. As we will see, the case-by-case definition enables us to prove the completeness of the minimal system and its extensions, which however cannot be done if we use ``$W\psi\in s$ for some $\psi$'' as a conjunction (as the reader may verify).

Also notice that our definition differs from the canonical relation in~\cite[Def.~4.2]{Steinsvold:falsebelief} in that we have $W(\phi\land\psi)\in s$ instead of $W\phi\in s$. Besides, as already mentioned above, our definition is motivated by Almost Definability Schema. As we will see, the slight distinction enables us to show the completeness of the transitive system of $\mathcal{L}(W)$ (Sec.~\ref{sec.transitivelogic}), which cannot be done with $W\phi\in s$ instead.

The following result states that the truth lemma holds for ${\bf K^W}$.
\begin{lemma}
For all $\phi\in\mathcal{L}(W)$ and for all $s\in S^c$, we have
$$ \M^c,s\vDash\phi \text{~iff~}\phi\in s.$$
\end{lemma}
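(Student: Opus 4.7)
The plan is to proceed by induction on $\phi$, with the Boolean cases handled routinely via maximal consistency of $s$. The only non-trivial case is $\phi = W\chi$, and I would split it along the case distinction built into the definition of $R^c$.

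If $W\psi \notin s$ for every $\psi$, then $W\chi \notin s$ automatically, and also $\M^c, s \nvDash W\chi$: since $R^c(s) = \{s\}$ in this subcase, the truth of $W\chi$ at $s$ would demand simultaneously $\M^c, s \nvDash \chi$ and $\M^c, s \vDash \chi$. So the biconditional holds vacuously in this subcase.

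Now suppose $W\psi \in s$ for some $\psi$. For the direction $W\chi \in s \Rightarrow \M^c, s \vDash W\chi$, axiom A1 yields $\chi \notin s$ and hence $\M^c, s \nvDash \chi$ by the induction hypothesis; axiom A2 applied to $W\chi$ and $W\psi$ gives $W(\chi \land \psi) \in s$, so by the second clause of $R^c$ every successor $t$ of $s$ contains $\chi$ and therefore satisfies it by IH. Thus $\M^c, s \vDash W\chi$.

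The converse is the technical heart of the proof. Assume $W\chi \notin s$; if $\chi \in s$ we are done by IH, so suppose $\neg\chi \in s$ and set $\Gamma = \{\phi' : W(\phi' \land \psi) \in s\}$. It suffices to show that $\Gamma \cup \{\neg\chi\}$ is consistent, for then a Lindenbaum extension yields some $t \in S^c$ with $sR^ct$ (by the very definition of $R^c$ in this subcase) and $\chi \notin t$, hence $\M^c, t \nvDash \chi$ by IH, giving $\M^c, s \nvDash W\chi$. The main obstacle is precisely this consistency check, and Prop.~\ref{prop.admissiblerule} is tailor-made to resolve it: any hypothetical inconsistency produces $\phi_1, \ldots, \phi_n \in \Gamma$ with $\vdash \phi_1 \land \cdots \land \phi_n \to \chi$, and the proposition then delivers $\vdash W(\phi_1 \land \psi) \land \cdots \land W(\phi_n \land \psi) \land \neg\chi \to W\chi$. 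Since every $W(\phi_i \land \psi)$ lies in $s$ by definition of $\Gamma$ and $\neg\chi \in s$ by assumption, maximal consistency forces $W\chi \in s$, contradicting our hypothesis. This is also exactly the reason the canonical relation is formulated with $W(\phi \land \psi)$ rather than Steinsvold's $W\phi$, a distinction the excerpt explicitly flags as indispensable for the extensions treated later.
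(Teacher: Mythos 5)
Your proposal is correct and follows essentially the same route as the paper: induction on $\phi$, the case split on whether $W\psi\in s$ for some $\psi$, the left-to-right direction via A1 and the definition of $R^c$, and the right-to-left direction via consistency of $\{\phi'\mid W(\phi'\land\psi)\in s\}\cup\{\neg\chi\}$ using Prop.~\ref{prop.admissiblerule} and Lindenbaum's Lemma. The only (harmless) difference is a micro-step in the left-to-right direction: you obtain $W(\chi\land\psi)\in s$ from A2 with the fixed witness $\psi$, whereas the paper instantiates the witness in the definition of $R^c$ by $\phi$ itself; both are fine.
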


\begin{proof}
By induction on $\phi$. We only consider the case $W\phi$.

`If': suppose that $W\phi\in s$, to show that $\M^c,s\vDash W\phi$. By supposition and axiom A1, $\neg\phi\in s$, and thus $\phi\notin s$. By IH, we have $\M^c,s\nvDash\phi$. Now let $t\in S^c$ such that $sR^ct$, by IH, it suffices to show that $\phi\in t$. By definition of $T^c$ and supposition, we infer that for all $\chi$, if $W(\chi\land\phi)\in s$, then $\chi\in t$. By letting $\chi$ be $\phi$, we derive that $\phi\in t$.

`Only if': assume that $W\phi\notin s$, to show that $\M^c,s\nvDash W\phi$. If $W\psi\in s$ for {\em no} $\psi$, then we are done, since otherwise we would have $\M^c,s\vDash\phi$ and $\M^c,s\nvDash\phi$. Now we consider the case that $W\psi\in s$ for {\em some} $\psi$. For this, suppose that $\neg\phi\in s$, by IH and Lindenbaum's Lemma, we only need to show that $\{\chi\mid W(\chi\land\psi)\in s\}\cup\{\neg\phi\}$ (denoted $\Gamma$) is consistent. 

Since $W\psi\in s$, $\{\chi\mid W(\chi\land\psi)\in s\}$ is nonempty. If $\Gamma$ is not consistent, then there are $\chi_1,\dots,\chi_n$ such that $W(\chi_i\land\psi)\in s$ for all $i=1,\dots,n$ and 
$$\vdash \chi_1\land\cdots\land\chi_n\to\phi.$$
By Prop.~\ref{prop.admissiblerule},
$$\vdash W(\chi_1\land\psi)\land \cdots\land W(\chi_n\land\psi)\land \neg\phi\to W\phi.$$
As $W(\chi_i\land\psi)\in s$ for all $i=1,\dots,n$ and $\neg\phi\in s$, we infer that $W\phi\in s$, which contradicts the assumption, as desired.
\end{proof}

It is now routine to show the following.
\begin{theorem}\label{thm.comp-k}
    ${\bf K^W}$ is sound and strongly complete with resepct to the class of all frames.
\end{theorem}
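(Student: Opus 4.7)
The plan is the standard Henkin-style argument, with all the substantive work already done in the canonical model construction (Def.~\ref{def.cm}) and the truth lemma preceding the theorem. It remains only to verify soundness and to package the truth lemma into a strong completeness statement via Lindenbaum.

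For soundness, I would check that every axiom is valid and every rule preserves validity. A0 and MP are standard. For A1, if $\M,s\vDash W\phi$ then by the clause for $W$ we immediately have $\M,s\nvDash\phi$. For A2, suppose $\M,s\vDash W\phi\land W\psi$; then $s$ refutes both $\phi$ and $\psi$, so it refutes $\phi\land\psi$, and from $R(s)\vDash\phi$ and $R(s)\vDash\psi$ we get $R(s)\vDash\phi\land\psi$, hence $\M,s\vDash W(\phi\land\psi)$. For R1, assume $\vDash\phi\to\psi$ and $\M,s\vDash W\phi\land\neg\psi$; then $R(s)\vDash\phi$, and because $\phi\to\psi$ is valid (hence true at every successor), $R(s)\vDash\psi$, while $\M,s\nvDash\psi$ by assumption, so $\M,s\vDash W\psi$.

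For strong completeness, I would argue contrapositively. Suppose $\Gamma\nvdash\phi$ in ${\bf K^W}$. Then $\Gamma\cup\{\neg\phi\}$ is consistent, so by Lindenbaum's Lemma it extends to a maximal consistent set $s\in S^c$. Applying the truth lemma established just above, every $\psi\in\Gamma$ satisfies $\M^c,s\vDash\psi$, while $\M^c,s\vDash\neg\phi$. Hence $\Gamma\nvDash\phi$ over the canonical frame, which is a frame in the class of all frames. Combined with soundness this gives the equivalence $\Gamma\vdash\phi$ iff $\Gamma\vDash\phi$.

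The only delicate point is really the soundness of R1, where one must be careful that the hypothesis $\phi\to\psi$ is used as a valid formula (not merely as a local truth at $s$); since R1 is applied to theorems of ${\bf K^W}$, this is legitimate, and validity of the conclusion follows uniformly at every pointed model. Beyond that, there is no obstacle: the genuine work was absorbed into choosing the right canonical relation (guided by Almost Definability) and into the truth lemma, so the theorem itself follows by a completely routine packaging step.
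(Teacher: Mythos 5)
Your proposal is correct and matches the paper's intent: the paper explicitly treats this theorem as ``routine'' once the canonical model (Def.~\ref{def.cm}) and the truth lemma are in place, and your packaging --- soundness by validating A1, A2 and R1 (with the correct observation that R1's premise is used as a validity), completeness via Lindenbaum plus the truth lemma --- is exactly that routine argument.
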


\subsubsection{Serial Logic}

Let ${\bf KD^W}$ denote ${\bf K^W}+\text{AD}$, where AD is $\neg W\bot$.
\begin{theorem}\label{thm.comp-d}
${\bf KD^W}$ is sound and strongly complete with respect to the class of serial frames.
\end{theorem}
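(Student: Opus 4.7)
The plan is to prove soundness and completeness separately, reusing as much of the minimal system's machinery as possible. For soundness, beyond the soundness of ${\bf K^W}$ already implicit in Theorem~\ref{thm.comp-k}, I only need to check that the new axiom AD, namely $\neg W\bot$, is valid on serial frames. This is immediate from the semantics: if $s$ has any successor $t$, then $\M,t\vDash\bot$ is impossible, so $W\bot$ fails at $s$. Hence every serial frame validates AD.

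For completeness, the idea is to reuse the canonical model from Definition~\ref{def.cm} verbatim, except that $S^c$ now collects maximal ${\bf KD^W}$-consistent sets. Because ${\bf KD^W}$ extends ${\bf K^W}$, the arguments underlying the truth lemma for $\mathcal{L}(W)$ go through without modification. The only substantive new task is to verify that the canonical relation $R^c$ is serial; once this is done, strong completeness follows in the usual way by Lindenbaum-style extension of any ${\bf KD^W}$-consistent set.

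To establish seriality of $R^c$, I split on the two clauses in the definition of $R^c$. If no formula of the form $W\psi$ belongs to $s$, then $sR^cs$ by fiat, and $s$ has itself as a successor. If some $W\psi \in s$, I claim the set $\Gamma := \{\phi \mid W(\phi\land\psi)\in s\}$ is ${\bf KD^W}$-consistent; any maximal consistent extension $t\supseteq \Gamma$ then satisfies $sR^ct$. This is the main obstacle, and it is where AD enters. Suppose for contradiction that $\Gamma$ is inconsistent; then there are $\phi_1,\dots,\phi_n\in\Gamma$ with $\vdash \phi_1\land\cdots\land\phi_n\to\bot$. Applying Proposition~\ref{prop.admissiblerule} with $\phi:=\bot$ and the chosen $\psi$, I obtain $\vdash W(\phi_1\land\psi)\land\cdots\land W(\phi_n\land\psi)\land\neg\bot\to W\bot$, and since $\neg\bot$ is a tautology this yields $W\bot\in s$, contradicting $\neg W\bot\in s$.

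Putting the pieces together: every state of $\M^c$ has an $R^c$-successor, so the underlying frame is serial; by the truth lemma inherited from Section~\ref{subsec.minimallogic}, each maximal ${\bf KD^W}$-consistent set is satisfied at itself in $\M^c$; hence every ${\bf KD^W}$-consistent set of $\mathcal{L}(W)$-formulas has a model on a serial frame, establishing strong completeness.
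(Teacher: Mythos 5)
Your proposal is correct and follows essentially the same route as the paper: soundness by checking AD on serial frames, and completeness by reusing the canonical model of Definition~\ref{def.cm} for ${\bf KD^W}$ and proving seriality of $R^c$ via the same two-case split, with the inconsistency of $\{\phi\mid W(\phi\land\psi)\in s\}$ refuted by Proposition~\ref{prop.admissiblerule} (instantiated with $\bot$) and the axiom $\neg W\bot$.
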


\begin{proof}
For soundness, by Thm.~\ref{thm.comp-k}, it remains only to show the validity of the axiom AD.

If there is a serial model $\M=\lr{S,T,V}$ and a state $s\in S$ such that $\M,s\vDash W\bot$. Then $\M,s\vDash \top$ and for all $t$, if $sRt$ then $\M,t\vDash \bot$. This is impossible since $R$ is serial. Hence $\neg W\bot$ is valid over the class of serial models.

For completeness, define $\M^c$ w.r.t. ${\bf KD^W}$ as in Def.~\ref{def.cm}. By Thm.~\ref{thm.comp-k}, it suffices to prove that $R^c$ is serial. For this, assume that $s\in S^c$. We consider two cases. If there is no $\psi$ such that $W\psi\in s$, then by definition of $R^c$, $sR^cs$. The remainder is the case that there is some $\psi$ such that $W\psi\in s$. In this case, the set $\{\phi\mid W(\phi\land\psi)\in s\}$ is nonempty. By definition of $R^c$ and Lindenbaum's Lemma, we only need to show that this set is consistent.

If not, then there are $\phi_1,\dots,\phi_n$ such that $W(\phi_i\land\psi)\in s$ for all $i=1,\dots,n$, and
$$\vdash \phi_1\land\cdots\land\phi_n\to\bot.$$
By Prop.~\ref{prop.admissiblerule},
$$\vdash W(\phi_1\land\psi)\land \cdots\land W(\phi_n\land\psi)\land\neg\bot\to W\bot.$$
Since $W(\phi_i\land\psi)\in s$ for all $i=1,\dots,n$, and $\neg \bot\in s$, we infer that $W\bot\in s$, which contradicts the fact that $\vdash \neg W\bot$.
\end{proof}

\subsubsection{Reflexive Logic}

Let ${\bf T^W}$ denote ${\bf K^W}+\neg W\phi$.
\begin{theorem}
    ${\bf T^W}$ is sound and strongly complete with respect to the class of reflexive frames.
\end{theorem}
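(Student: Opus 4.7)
The proof splits into soundness and completeness, both of which should be short given the machinery already in place.

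For soundness, on top of the soundness of ${\bf K^W}$ (Thm.~\ref{thm.comp-k}), I only need to verify that every instance of the new axiom $\neg W\phi$ is valid on the class of reflexive frames. This is immediate from Fact~\ref{fact.imp}, which says exactly that $W\phi$ is false at any reflexive point; hence $\neg W\phi$ is valid on every reflexive frame.

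For completeness, the plan is to reuse the canonical model of Def.~\ref{def.cm}, but with $S^c$ now taken to be the set of maximal ${\bf T^W}$-consistent sets. The truth lemma proved for ${\bf K^W}$ uses only axioms A0--A2, the rule R1, and Prop.~\ref{prop.admissiblerule}, all of which are still present in ${\bf T^W}$, so the same argument transfers verbatim. It therefore suffices to check that the canonical relation $R^c$ is reflexive. This is where the new axiom pays off: since $\neg W\phi$ is a theorem for every $\phi$, we have $\neg W\phi \in s$ for every $s \in S^c$ and every $\phi$, so the condition ``$W\psi \in s$ for some $\psi$'' never holds at any point of $\M^c$. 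Consequently the first clause of Def.~\ref{def.cm} always applies, and $R^c$ collapses to the identity relation on $S^c$, which is trivially reflexive.

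There is no real obstacle here: the axiom is so strong that it forces the canonical relation to be the identity, and reflexivity comes for free. The only point one should double-check is that this collapsed $R^c$ is still compatible with the truth lemma, but this is automatic, since with $R^c$ the identity the semantic clause for $W\phi$ at $s$ requires both $\M^c,s\nvDash\phi$ and $\M^c,s\vDash\phi$, and hence always yields $\M^c,s\nvDash W\phi$, in agreement with the fact that $\neg W\phi \in s$.
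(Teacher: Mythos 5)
Your proposal is correct and follows essentially the same route as the paper: soundness reduces to the validity of $\neg W\phi$ on reflexive frames via Fact~\ref{fact.imp}, and completeness reuses the canonical model of Def.~\ref{def.cm}, observing that since $\vdash\neg W\phi$, no maximal consistent set contains any $W\psi$, so the first clause of the definition of $R^c$ applies and gives $sR^cs$ (indeed, as you note, $R^c$ is the identity). Your extra check that the collapsed relation is compatible with the truth lemma is harmless but not needed, since the truth lemma is proved once for ${\bf K^W}$ and transfers to any extension.
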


\begin{proof}
    For soundness, by Thm.~\ref{thm.comp-k}, it remains only to show the validity of $\neg W\phi$, which can be obtained from Fact~\ref{fact.imp}.

    For completeness, define $\M^c$ w.r.t. ${\bf T^W}$ as in Def.~\ref{def.cm}. By Thm.~\ref{thm.comp-k}, it suffices to show that $R^c$ is reflexive. For this, let $s\in S^c$. Since $\vdash \neg W\phi$, there is no $\psi$ such that $W\psi\in s$. By definition of $R^c$, we derive that $sR^cs$, as desired.
\end{proof}

\subsubsection{Transitive Logic}\label{sec.transitivelogic}

Let ${\bf K4^W}$ denote the extension of ${\bf K^W}$ with the following axiom:
\[
\begin{array}{ll}
     \text{A4} & W\psi\land W(\phi\land\psi)\to W((W\chi\to W(\phi\land \chi))\land\psi).\\
\end{array}
\]

As mentioned in the introduction, a difficult thing in axiomatizing $\mathcal{L}(W)$ over transitive frames is how to find the desired core axiom. Here the axiom A4 is obtained from the modal axiom 4 (i.e. $\Box\phi\to\Box\Box\phi$) via a translation induced by Almost Definability Schema.
\[
\begin{array}{llr}
   & W\psi\to (\Box\phi\to \Box(W\chi\to\Box\phi)) & (1) \\
  \Longleftrightarrow & W\psi\to (W(\phi\land\psi)\to W((W\chi\to\Box\phi)\land\psi)  & (2)\\
  \Longleftrightarrow & W\psi\to (W(\phi\land\psi)\to W((W\chi\to W(\phi\land\chi))\land\psi)) & (3)\\
  \Longleftrightarrow & W\psi\land W(\phi\land\psi)\to W((W\chi\to W(\phi\land\chi))\land\psi) & (4)\\
\end{array}
\]
We write $W\psi\to (\Box\phi\to \Box(W\chi\to\Box\phi))$ rather than $\Box\phi\to\Box\Box\phi$, since $\Box$ is definable in terms of $W$ under the condition $W\psi$ for some $\psi$. Note that every transformation is equivalent. The above transitions from (1) to (2) and from (2) to (3) follow from Prop.~\ref{prop.almost-definability-schema}. By using propositional calculus (axiom A0), we then obtain the axiom (4), that is, A4.


\begin{proposition}\label{prop.validity-transitiveaxiom}
A4 is valid on the class of transitive frames.
\end{proposition}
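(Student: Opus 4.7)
The plan is a direct semantic verification. Fix a transitive frame $\mathcal{F}=\langle S,R\rangle$, a valuation $V$, a state $s$, and assume $\M,s\vDash W\psi\land W(\phi\land\psi)$. Unpacking the semantics of $W$ (as rewritten via $R(s)$), this gives $\M,s\nvDash\psi$, $R(s)\vDash\psi$, and $R(s)\vDash\phi\land\psi$, so in particular $R(s)\vDash\phi$.

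To establish $\M,s\vDash W\bigl((W\chi\to W(\phi\land\chi))\land\psi\bigr)$, I need to verify the two clauses of the semantic clause for $W$. The non-truth clause at $s$ is immediate: since $\M,s\nvDash\psi$, the conjunction $(W\chi\to W(\phi\land\chi))\land\psi$ is false at $s$. For the other clause, I must show that for every $t$ with $sRt$, both $\M,t\vDash\psi$ and $\M,t\vDash W\chi\to W(\phi\land\chi)$ hold. The former is immediate from $R(s)\vDash\psi$. For the latter, suppose $t\in R(s)$ and $\M,t\vDash W\chi$; then $\M,t\nvDash\chi$ and $R(t)\vDash\chi$. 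Hence $\M,t\nvDash\phi\land\chi$, which gives the first half of $W(\phi\land\chi)$ at $t$.

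The one step that uses transitivity, and which I expect to be the only non-bookkeeping point, is showing $R(t)\vDash\phi\land\chi$. We already have $R(t)\vDash\chi$, so it remains to show $R(t)\vDash\phi$. This is exactly where transitivity enters: from $sRt$ and $tRu$, transitivity yields $sRu$, so $R(t)\subseteq R(s)$, and since $R(s)\vDash\phi$ we conclude $R(t)\vDash\phi$. Combining these gives $\M,t\vDash W(\phi\land\chi)$, which completes the inner implication and hence the entire argument.

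No obstacle beyond this transitivity use is anticipated; the rest is a careful accounting of truth conditions, and the shape of A4 has been engineered (via the Almost Definability Schema translation of $\Box\phi\to\Box\Box\phi$) precisely so that the required $R(t)\subseteq R(s)$ inclusion does the expected work.
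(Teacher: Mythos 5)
Your proof is correct and is essentially the paper's argument: the same unpacking of the semantics of $W$ and the same single use of transitivity to obtain $R(t)\subseteq R(s)$ (so that $R(s)\vDash\phi$ yields $R(t)\vDash\phi$). The only difference is presentational — the paper runs the verification as a reductio, producing a witness $u$ with $tRu$ and deriving a contradiction, whereas you argue directly — which does not change the substance.
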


\begin{proof}
    Let $\M=\lr{S,R,V}$ be a transitive model and $s\in S$. Suppose, for reductio, that $\M,s\vDash W\psi\land W(\phi\land\psi)$ but $\M,s\nvDash W((W\chi\to W(\phi\land \chi))\land\psi)$. From $\M,s\vDash W\psi$ it follows that $R(s)\vDash\psi$ and $\M,s\nvDash\psi$, thus $\M,s\nvDash (W\chi\to W(\phi\land \chi))\land\psi$. This plus $\M,s\nvDash W((W\chi\to W(\phi\land \chi))\land\psi)$ implies that $R(s)\nvDash (W\chi\to W(\phi\land \chi))\land\psi$, that is, there exists $t$ such that $sRt$ and $\M,t\nvDash (W\chi\to W(\phi\land\chi))\land\psi$. Since $R(s)\vDash\psi$, we infer that $\M,t\vDash \psi$, thus $\M,t\nvDash W\chi\to W(\phi\land\chi)$, namely $t\vDash W\chi$ and $t\nvDash W(\phi\land\chi)$. This entails that $t\nvDash\chi$ and thus $t\nvDash \phi\land\chi$, hence there is a $u$ such that $tRu$ and $\M,u\nvDash\phi\land\chi$. However, we have also $\M,u\vDash\phi\land\chi$: $\M,u\vDash\chi$ follows from $t\vDash W\chi$ and $tRu$, whereas $\M,u\vDash\phi$ is due to the fact that $sRu$ (this is because $sRt$ and $tRu$ and $R$ is transitive) and $s\vDash W(\phi\land\psi)$. A contradiction.
\end{proof}

With the previous preparation in hand, we can show the following.
\begin{theorem}\label{thm.comp-k4}
    ${\bf K4^W}$ is sound and strongly complete with respect to the class of transitive frames.
\end{theorem}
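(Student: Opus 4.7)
The plan is to handle soundness and completeness separately, reducing each to the corresponding result for ${\bf K^W}$. For soundness, since ${\bf K4^W}$ extends ${\bf K^W}$ by only axiom A4, and Thm.~\ref{thm.comp-k} already delivers soundness of ${\bf K^W}$ over the class of all frames, it suffices to invoke Prop.~\ref{prop.validity-transitiveaxiom}, which establishes the validity of A4 on every transitive frame.

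For completeness, I would reuse the canonical model $\M^c=\lr{S^c,R^c,V^c}$ of Def.~\ref{def.cm}, now built from maximal consistent sets for ${\bf K4^W}$. The proof of the truth lemma for ${\bf K^W}$ used only A1, A2, and the derived rule of Prop.~\ref{prop.admissiblerule}, all of which remain theorems of ${\bf K4^W}$, so the same argument yields the truth lemma here. A standard Lindenbaum argument then reduces completeness to showing that $R^c$ is transitive.

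Fix $s,t,u\in S^c$ with $sR^ct$ and $tR^cu$. If $W\psi\notin s$ for every $\psi$, then by Def.~\ref{def.cm} we have $s=t$, and $sR^cu$ follows from $tR^cu$; symmetrically, if some $\psi$ satisfies $W\psi\in s$ but $W\chi\notin t$ for every $\chi$, then $t=u$ and $sR^cu$ follows from $sR^ct$. The central case is when there exist $\psi,\chi$ with $W\psi\in s$ and $W\chi\in t$. To establish $sR^cu$, assume $W(\phi\land\psi)\in s$ and aim to derive $\phi\in u$. Instantiating A4 at the triple $\phi,\psi,\chi$ and applying MP with $W\psi,W(\phi\land\psi)\in s$ yields $W((W\chi\to W(\phi\land\chi))\land\psi)\in s$. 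Then $sR^ct$, used with the witness $\psi$, gives $W\chi\to W(\phi\land\chi)\in t$; combined with $W\chi\in t$ via MP, this gives $W(\phi\land\chi)\in t$. Finally, $tR^cu$ used with the witness $\chi$ delivers $\phi\in u$, as required.

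The conceptual obstacle is really packaged into axiom A4 itself — the observation that the standard 4-axiom $\Box\phi\to\Box\Box\phi$, rewritten through the Almost Definability Schema, takes precisely the form needed to propagate membership through two successive applications of $R^c$. The technical point worth double-checking is that the case split of Def.~\ref{def.cm} cooperates with the case analysis above; this works cleanly because the ``no witness'' cases collapse $R^c$ to the identity on the relevant state (making transitivity immediate there), while the ``some witness'' case is exactly the one where A4 can be brought to bear.
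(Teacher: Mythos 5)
Your proposal is correct and follows essentially the same route as the paper: soundness via Thm.~\ref{thm.comp-k} together with Prop.~\ref{prop.validity-transitiveaxiom}, and completeness by reusing the canonical model of Def.~\ref{def.cm} and proving transitivity of $R^c$ with exactly the paper's three-case analysis, where the witnessed case instantiates A4 and pushes membership through $sR^ct$ and then $tR^cu$. No gaps to report.
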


\begin{proof}
The soundness follows immediately from Thm.~\ref{thm.comp-k} and Prop.~\ref{prop.validity-transitiveaxiom}.

For completeness, define $\M^c$ w.r.t. ${\bf K4^W}$ as in Def.~\ref{def.cm}. By Thm.~\ref{thm.comp-k}, it suffices to show that $R^c$ is transitive. Let $s,t,u\in S^c$. Suppose that $sR^ct$ and $tR^cu$, to prove that $sR^cu$. We consider the following cases.
    \begin{itemize}
        \item $W\psi\in s$ for no $\psi$. In this case, by definition of $R^c$, $s=t$. Then $sR^cu$.
        \item $W\psi\in t$ for no $\psi$. Similar to the first case, we can show that $sR^cu$.
        \item $W\psi\in s$ for some $\psi$ and $W\psi'\in t$ for some $\psi'$. In this case, assume for all $\phi$ that $W(\phi\land\psi)\in s$, to show that $\phi\in u$. Using axiom A4, we derive that $W((W\psi'\to W(\phi\land\psi'))\land\psi)\in s$. By $sR^ct$ and definition of $R^c$, we infer that $W\psi'\to W(\phi\land\psi')\in t$, thus $W(\phi\land\psi')\in t$. Now using $tR^cu$ and definition of $R^c$, we conclude that $\phi\in u$, as desired.
    \end{itemize}
\end{proof}

We have thus solved an open problem raised in~\cite{Steinsvold:falsebelief}.
By Thm.~\ref{thm.comp-d} and Thm.~\ref{thm.comp-k4}, we have the following.
\begin{theorem}
${\bf KD4^W}$ is sound and strongly complete with respect to the class of $\mathcal{D}4$-frames.
\end{theorem}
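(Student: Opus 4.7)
The plan is to reuse the uniform canonical model construction of Def.~\ref{def.cm} and combine the two arguments already established in Thm.~\ref{thm.comp-d} and Thm.~\ref{thm.comp-k4}. Soundness is immediate: by Thm.~\ref{thm.comp-k} the base axioms and rules are sound on all frames, while Prop.~\ref{prop.validity-transitiveaxiom} gives validity of A4 on transitive frames, and the argument in the proof of Thm.~\ref{thm.comp-d} gives validity of AD on serial frames. Since every $\mathcal{D}4$-frame is both serial and transitive, soundness follows.

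For strong completeness, I would build the canonical model $\M^c$ with respect to ${\bf KD4^W}$ exactly as in Def.~\ref{def.cm}, with maximal consistent sets relative to ${\bf KD4^W}$. The truth lemma goes through verbatim, since its proof only used the propositional axioms, A1, A2, and the derived rule of Prop.~\ref{prop.admissiblerule}, all of which are still available. Thus it suffices to check that $R^c$ is both serial and transitive.

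For seriality, I would run precisely the argument from the proof of Thm.~\ref{thm.comp-d}: given $s\in S^c$, either $s$ contains no formula of the form $W\psi$, in which case $sR^cs$ by the first clause of $R^c$, or some $W\psi\in s$, in which case the set $\{\phi\mid W(\phi\land\psi)\in s\}$ must be consistent (otherwise Prop.~\ref{prop.admissiblerule} would yield $W\bot\in s$, contradicting AD), and so extends to a successor via Lindenbaum's Lemma. For transitivity, I would replay the case analysis from the proof of Thm.~\ref{thm.comp-k4}: if either $s$ or $t$ has no $W$-formula then the corresponding world collapses to its unique successor and transitivity is trivial; in the remaining case, $W\psi\in s$ and $W\psi'\in t$, and axiom A4 allows us to push $W(\phi\land\psi)\in s$ through $t$ to obtain $W(\phi\land\psi')\in t$, and then through $u$ to $\phi\in u$.

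I do not anticipate a genuine obstacle, because the canonical relation of Def.~\ref{def.cm} was designed precisely so that the seriality and transitivity arguments are independent and each rests on exactly one of the two axioms AD and A4; no interaction has to be verified. The only thing worth flagging explicitly is that in the ``$W\psi\in s$ for no $\psi$'' branch used in the transitivity argument, the self-loop guaranteed by the first clause of $R^c$ is already enough to conclude $sR^cu$ from $tR^cu$ (and symmetrically in the other branch), so the seriality clause and the transitivity clause fit together seamlessly in the combined system ${\bf KD4^W}$.
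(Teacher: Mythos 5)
Your proposal is correct and matches the paper's intended argument: the paper derives this theorem directly by combining Thm.~\ref{thm.comp-d} and Thm.~\ref{thm.comp-k4}, relying on exactly the uniformity of the canonical model of Def.~\ref{def.cm} that you spell out (seriality from AD, transitivity from A4, with the truth lemma unchanged). Nothing further is needed.
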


This also answers another open problem raised in~\cite[Sect.~5]{Steinsvold:falsebelief}.

\subsubsection{Euclidean Logic}

Let ${\bf K5^W}$ denote the extension of ${\bf K^W}$ with the following axiom:
\[
\begin{array}{ll}
   \text{A5}  & W\psi\land\neg W(\phi\land\psi)\to W((W\chi\to \neg W(\phi\land\chi))\land\psi) \\
\end{array}
\]

Again, the axiom A5 is obtained from the modal axiom 5 (i.e. $\neg\Box\phi\to\Box\neg\Box\phi$) via a translation induced by Almost Definability Schema.
\[
\begin{array}{llr}
   & W\psi\to (\neg\Box\phi\to \Box(W\chi\to\neg\Box\phi)) & (1') \\
  \Longleftrightarrow & W\psi\to (\neg W(\phi\land\psi)\to W((W\chi\to\neg\Box\phi)\land\psi)  & (2')\\
  \Longleftrightarrow & W\psi\to (\neg W(\phi\land\psi)\to W((W\chi\to \neg W(\phi\land\chi))\land\psi)) & (3')\\
  \Longleftrightarrow & W\psi\land \neg W(\phi\land\psi)\to W((W\chi\to \neg W(\phi\land\chi))\land\psi) & (4')\\
\end{array}
\]

Here, we write $W\psi\to (\neg\Box\phi\to \Box(W\chi\to\neg\Box\phi))$ instead of $\neg\Box\phi\to\Box\neg\Box\phi$, since $\Box$ is definable in terms of $W$ provided that $W\psi$ for some $\psi$. Again, every transformation is equivalent. The above transitions from $(1')$ to $(2')$ and from $(2')$ to $(3')$ follow from Prop.~\ref{prop.almost-definability-schema}. Then by using axiom A0, we get the axiom $(4')$, that is, A5.

Different from our ${\bf K5^W}$, the Euclidean system in~\cite{Steinsvold:falsebelief}, denoted ${\bf S^W}\oplus \text{A}^Q$ there, is defined as the extension of ${\bf S^W}$ (that is, our ${\bf K^W}$) with $\text{A}^Q$, where $\text{A}^Q$ is $W\phi\to W(\neg W\psi\land \phi)$. It is shown in~\cite[Thm.~4.15]{Steinsvold:falsebelief} that ${\bf S^W}\oplus \text{A}^Q$ is sound and complete with respect to the class of Euclidean (and transitive) frames. In what follows, we show that $\text{A}^Q$ is provable in ${\bf K5^W}$.
\begin{proposition}
    $\text{A}^Q$ is provable in ${\bf K5^W}$. 
\end{proposition}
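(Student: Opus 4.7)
The plan is to derive $\text{A}^Q$ directly from A5 by a single well-chosen instantiation, followed by applications of REW and propositional reasoning; no semantic detour or auxiliary lemma is needed.

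First, I would take the instance of A5
\[
W\psi\land\neg W(\phi\land\psi)\to W((W\chi\to \neg W(\phi\land\chi))\land\psi)
\]
obtained by substituting its schema variable $\phi$ with $\neg W\psi$ and its $\chi$ with $\psi$, while keeping the outer $\psi$ as $\phi$ (the target's $\phi$). This yields
\[
W\phi\land\neg W(\neg W\psi\land\phi)\to W\bigl((W\psi\to\neg W(\neg W\psi\land\psi))\land\phi\bigr).
\]

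Next, I would simplify the right-hand side via REW (admissible in ${\bf K^W}$ by~\cite[Thm.~3.1]{Steinsvold:falsebelief}). The crucial observation is that the contrapositive of A1 is $\psi\to\neg W\psi$, so $\neg W\psi\land\psi\lra\psi$ is a propositional consequence of A1; by REW, $W(\neg W\psi\land\psi)\lra W\psi$. Substituting inside the conclusion rewrites it as $W((W\psi\to\neg W\psi)\land\phi)$. A second REW step, using the propositional equivalence $(W\psi\to\neg W\psi)\lra\neg W\psi$, collapses this further to $W(\neg W\psi\land\phi)$. The instance of A5 has thus become
\[
W\phi\land\neg W(\neg W\psi\land\phi)\to W(\neg W\psi\land\phi),
\]
from which the propositional tautology $(X\land\neg Y\to Y)\to(X\to Y)$ extracts exactly $W\phi\to W(\neg W\psi\land\phi)$, i.e., $\text{A}^Q$.

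I do not expect any real obstruction. The only non-mechanical ingredient is discovering the instantiation $\phi\mapsto\neg W\psi$ in A5; after that, the REW rewrites work precisely because A1 already makes the inner conjunct $\neg W\psi$ ``invisible'' beside $\psi$, and the outer propositional step converts the resulting $\neg Y\to Y$ shape into the desired implication. The whole derivation should fit in a short six-line proof sequence.
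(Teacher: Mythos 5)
Your derivation is correct and is essentially the paper's own proof: the same instantiation of A5 (its $\phi$ as $\neg W\psi$, its $\chi$ as $\psi$, its conditioning formula as the target's $\phi$), the same A1-plus-REW simplification of the consequent to $W(\neg W\psi\land\phi)$, and the same final propositional step from the shape $X\land\neg Y\to Y$ to $X\to Y$. (Note the paper's printed line (1) shows $W\psi$ in the antecedent where $W\phi$ is intended, so your version is in fact the cleaner statement of the same argument.)
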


\begin{proof}
    We have the following proof sequence in ${\bf K5^W}$.
    \[
    \begin{array}{lll}
        (1) & W\psi\land\neg W(\neg W\psi\land\phi)\to W((W\psi\to \neg W(\neg W\psi\land\psi))\land\phi) & \text{A5} \\
        (2) & \psi\to\neg W\psi & \text{A1}\\
        (3) & (\neg W\psi\land\psi)\lra \psi & (2),\text{A0}\\
        (4) & W(\neg W\psi\land\psi)\lra W\psi & (3),\text{REW} \\
        (5) & (W\psi\to \neg W(\neg W\psi\land\psi))\lra (W\psi\to\neg W\psi) & (4),\text{A0}\\
        (6) & (W\psi\to \neg W(\neg W\psi\land\psi))\lra \neg W\psi & (5),\text{A0}\\
        (7) & ((W\psi\to \neg W(\neg W\psi\land\psi))\land \phi)\lra  (\neg W\psi\land\phi) & (6),\text{A0}\\
        (8) & W((W\psi\to \neg W(\neg W\psi\land\psi))\land \phi)\lra W (\neg W\psi\land\phi) & (7),\text{REW}\\
        (9) & W\psi\land\neg W(\neg W\psi\land\phi)\to W (\neg W\psi\land\phi) & (1),(8)\\
        (10) & W\psi\to W (\neg W\psi\land\phi) & (9),\text{A0}\\
    \end{array}
    \]
\end{proof}

\weg{Different from our ${\bf K5^W}$, the Euclidean system in~\cite{Steinsvold:falsebelief}, denoted ${\bf S^W}\oplus \text{A}^Q$ there, is defined as the extension of ${\bf S^W}$ (that is, our ${\bf K^W}$) with $\text{A}^Q$, where $\text{A}^Q$ is $W\phi\to W(\neg W\psi\land \phi)$. It is shown in~\cite[Thm.~4.15]{Steinsvold:falsebelief} that ${\bf S^W}\oplus \text{A}^Q$ is sound and complete with respect to the class of Euclidean (and transitive) frames. However, as we will show, the soundness part fails, since $\text{A}^Q$ is {\em not} valid over the class of Euclidean frames.
\begin{proposition}\label{prop.invalid}
    $W\phi\to W(\neg W\psi\land\phi)$ is invalid over the class of Euclidean frames.
\end{proposition}

\begin{proof}
    Consider the following Euclidean model $\M$:
    \[
    \xymatrix{s:\neg p,q\ar[rr] && t:p,\neg q\ar@(ur,ul) \\}
    \]
    In $\M$, as $s\nvDash p$ and $R(s)=\{t\}$ and $t\vDash p$, we have $s\vDash Wp$. On the other hand, since $t\vDash q$, it follows that $t\vDash \neg Wq$, thus $t\vDash \neg Wq\land p$, and hence $s\nvDash W(\neg Wq\land p)$. Therefore, $s\nvDash Wp\to W(\neg Wq\land p)$. This shows the statement.
\end{proof}

This argues against the claim in~\cite[Thm.~4.15]{Steinsvold:falsebelief} that ${\bf S^W}\oplus \text{A}^Q=\text{K5}_W=\text{K45}_W$, and also the claim in~\cite[the top of page 252]{Steinsvold:falsebelief} that ${\bf S^W}\oplus \text{A}^Q\oplus \text{A}^D=\text{KD45}_W$, where $\text{A}^D$ is $\neg W\bot$.

Besides, it is claimed in~\cite[Thm.~4.13]{Steinsvold:falsebelief} without a proof that $\text{A}^Q$ defines the frame property $\forall x\forall y(\text{if~}xRy\land {\sim}xRx,\text{~then~}yRy)$.\footnote{Here ${\sim}xRx$ means that it does not hold that $xRx$; in other words, $x$ is irreflexive. Note that in~\cite{Steinsvold:falsebelief}, there is a superfluous wording ``in $F$'' in the statement of Thm.~4.13.} From the proof of the above Prop.~\ref{prop.invalid}, one can see that the claim is also false, since the model $\M$ there also possesses the frame property under discussion.}

Below, we will demonstrate that our axiom A5 is valid over the class of Euclidean frames.

\begin{proposition}
    A5 is valid on the class of Euclidean frames.
\end{proposition}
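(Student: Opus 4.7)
The plan is to mimic the validity proof for A4 (Prop.~\ref{prop.validity-transitiveaxiom}) by unpacking the semantics of the antecedent to extract a useful ``witness'' in $R(s)$, and then showing that Euclideanness makes this witness visible from \emph{every} successor of $s$. Fix a Euclidean model $\M=\lr{S,R,V}$ and $s\in S$, and assume $\M,s\vDash W\psi\land\neg W(\phi\land\psi)$. From $W\psi$ at $s$ I get $s\nvDash\psi$ and $R(s)\vDash\psi$. Since $s\nvDash\psi$ implies $s\nvDash\phi\land\psi$, the hypothesis $\neg W(\phi\land\psi)$ forces $R(s)\nvDash\phi\land\psi$, so I can pick a witness $t$ with $sRt$ and $\M,t\nvDash\phi\land\psi$; as $R(s)\vDash\psi$, in fact $\M,t\vDash\psi$ and $\M,t\nvDash\phi$. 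This $t$ will do all the remaining work.

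Next I verify the conclusion $\M,s\vDash W\bigl((W\chi\to\neg W(\phi\land\chi))\land\psi\bigr)$. The ``first conjunct false at $s$'' clause is immediate from $s\nvDash\psi$. For the ``true throughout $R(s)$'' clause, pick any $u$ with $sRu$; then $u\vDash\psi$ since $R(s)\vDash\psi$, and it remains only to show $u\vDash W\chi\to\neg W(\phi\land\chi)$. So assume $\M,u\vDash W\chi$. The goal is $\M,u\nvDash W(\phi\land\chi)$, for which it suffices to exhibit some $v\in R(u)$ with $\M,v\nvDash\phi\land\chi$. Here I use Euclideanness: from $sRu$ and $sRt$ I obtain $uRt$, so $t$ itself lies in $R(u)$, and already $\M,t\nvDash\phi$ gives $\M,t\nvDash\phi\land\chi$, as required.

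The only delicate point, and the step worth emphasizing, is that the single witness $t$ extracted from the antecedent $\neg W(\phi\land\psi)$ can be reused uniformly at every successor $u$ of $s$; without Euclideanness there is no guarantee that the witness in $R(s)$ is also reachable from $u$, and indeed the axiom would fail. The remaining manipulations are routine semantic bookkeeping of $W$, entirely parallel in shape to the proof of Prop.~\ref{prop.validity-transitiveaxiom}.
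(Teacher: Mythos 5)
Your proof is correct and follows essentially the same route as the paper's: extract the witness $t\in R(s)$ with $t\vDash\psi$ and $t\nvDash\phi$ from $W\psi\land\neg W(\phi\land\psi)$, and use Euclideanness ($sRu$, $sRt$ imply $uRt$) to make $t$ refute $W(\phi\land\chi)$ at an arbitrary successor $u$. The only difference is presentational: you argue directly that the consequent holds at $s$, while the paper runs the same argument as a reductio.
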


\begin{proof}
    Let $\M=\lr{S,R,V}$ be an Euclidean model and $s\in S$.

    Suppose, for reductio, that $\M,s\vDash W\psi\land\neg W(\phi\land\psi)$ but $\M,s\nvDash W((W\chi\to \neg W(\phi\land\chi))\land\psi)$. Then $\M,s\nvDash\psi$, thus $\M,s\nvDash\phi\land\psi$ and $\M,s\nvDash (W\chi\to \neg W(\phi\land\chi))\land\psi$. It follows that there exists $t$ such that $sRt$ and $\M,t\nvDash \phi\land\psi$, and there exists $u$ such that $sRu$ and $\M,u\nvDash (W\chi\to \neg W(\phi\land\chi))\land\psi$. Using $s\vDash W\psi$ again, we derive that $t\vDash\psi$ and $u\vDash \psi$, and then $t\nvDash \phi$ and $u\nvDash W\chi\to \neg W(\phi\land\chi)$, that is, $u\vDash W\chi\land W(\phi\land\chi)$. By $sRu$ and $sRt$ and the Euclideanness of $R$, we have $uRt$. Then it follows from $u\vDash W(\phi\land\chi)$ that $t\vDash\phi$, as desired.
\end{proof}

\begin{proposition}\label{prop.sound-k5}
    ${\bf K5^W}$ is sound with respect to the class of Euclidean frames.
\end{proposition}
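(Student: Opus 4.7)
The plan is to reduce the soundness of ${\bf K5^W}$ to two facts already on the table: the soundness of the base system ${\bf K^W}$ (Thm.~\ref{thm.comp-k}) and the validity of the new axiom A5 over Euclidean frames (the immediately preceding proposition). Since ${\bf K5^W}$ is by definition ${\bf K^W}$ extended solely with the axiom scheme A5, establishing soundness amounts to verifying that every axiom is valid on the class of Euclidean frames, and that MP and R1 preserve validity on that class.

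First I would observe that every Euclidean frame is in particular a frame, so Thm.~\ref{thm.comp-k} delivers, with no further work, the validity on Euclidean frames of all axioms A0, A1, A2 of ${\bf K^W}$, together with the validity-preservation of the rules MP and R1. Next I would appeal to the preceding proposition for the validity of A5 on every Euclidean frame. A routine induction on the length of derivations in ${\bf K5^W}$ then yields that for every $\phi$ with $\vdash_{{\bf K5^W}}\phi$ we have $\mathcal{F}\vDash \phi$ for every Euclidean frame $\mathcal{F}$: the base case covers A0--A2 and A5, while the inductive step appeals to the fact that MP and R1 preserve validity on any fixed frame (and hence on any fixed frame class).

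I do not anticipate any real obstacle here; the statement is essentially bookkeeping. The only mildly delicate point worth spelling out is that validity-preservation by MP and R1 is a local property of a frame, so restricting attention from the class of all frames to the subclass of Euclidean frames does not disturb it — in particular R1, whose premise $\phi\to\psi$ must itself be valid on the frame class under consideration, behaves correctly because any theorem of ${\bf K5^W}$ inherited from ${\bf K^W}$ remains valid pointwise on each Euclidean frame. With these pieces in place the soundness statement follows immediately.
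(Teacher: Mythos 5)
Your proposal is correct and matches the paper's intent exactly: the paper states this proposition without a separate proof precisely because it follows immediately from the soundness part of Thm.~\ref{thm.comp-k} (which covers A0--A2, MP and R1 on every frame, hence on every Euclidean frame) together with the preceding proposition establishing the validity of A5 on Euclidean frames. Your induction-on-derivations bookkeeping is the standard argument the paper leaves implicit, so there is nothing to add.
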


Now we demonstrate the completeness of ${\bf K5^W}$ over Euclidean frames. Our proof is different from that used in~\cite[Thm.~4.15]{Steinsvold:falsebelief}. The proof is nontrivial. This is because the canonical model is secondarily reflexive (defined later), not Euclidean. Thus we need to transform the secondarily reflexive model into an Euclidean model, and the truth values of $\mathcal{L}(W)$-formulas have to be preserved during the transformation. This is our strategy. To begin with, we need a notion of secondary reflexivity.

We say that a model $\M=\lr{S,R,V}$ is {\em secondarily reflexive}, if for all $s,t\in S$, $sRt$ implies $tRt$. We have the following general result, which will be used in the proof of the completeness of ${\bf K5^W}$ (Thm.~\ref{thm.comp-k5}).

\begin{proposition}\label{prop.equivalentmodel}
For every secondarily reflexive model $\M=\lr{S,R,V}$, there exists an Euclidean model $\M'=\lr{S,R',V}$ such that for all $s\in S$, for all $\phi\in\mathcal{L}(W)$, $\M,s\vDash\phi$ iff $\M',s\vDash\phi$.
\end{proposition}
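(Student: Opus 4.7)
The plan is to take $\M' = \lr{S, R', V}$ where $R'$ is the Euclidean closure of $R$ on $S$, built iteratively: set $R_0 = R$, $R_{n+1} = R_n \cup \{(t, u) \mid \exists s\,(s R_n t \land s R_n u)\}$, and $R' = \bigcup_n R_n$. By construction $R \subseteq R'$ and $R'$ is Euclidean, so $\M'$ has the required shape; all that remains is to establish invariance of $\mathcal{L}(W)$-formulas between $\M$ and $\M'$.

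The pivotal lemma I intend to prove is that for every $(x, y) \in R' \setminus R$, both $x$ and $y$ are reflexive in $R$. The argument is by induction on the stage $n$ at which the pair is first added, using the \emph{strong} hypothesis that both endpoints of every newly added edge are $R$-reflexive. The base case $n = 0$ is vacuous. For the inductive step, a pair $(t, u) \in R_{n+1} \setminus R_n$ comes with a witness $s$ such that $s R_n t$ and $s R_n u$; to handle $t$, either $sRt$ already holds in $R$, in which case secondary reflexivity of $\M$ delivers $tRt$ directly, or else $(s, t) \in R_n \setminus R$ and the induction hypothesis applied to $(s, t)$ yields reflexivity of both $s$ and $t$, hence in particular $tRt$. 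The symmetric argument handles $u$. An immediate corollary is the dichotomy that for every $s \in S$, either $R(s) = R'(s)$, or else $s$ is reflexive in $R$ (and therefore in $R'$, since $R \subseteq R'$).

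With this dichotomy in hand, invariance follows by induction on $\phi \in \mathcal{L}(W)$; only the case $\phi = W\psi$ is nontrivial. If $s$ is reflexive in $R$, then it is reflexive in $R'$ as well, so both $\M, s \vDash W\psi$ and $\M', s \vDash W\psi$ are false by Fact~\ref{fact.imp}. Otherwise $R(s) = R'(s)$, and combining the induction hypothesis on $\psi$ (applied at $s$ and at every successor) with the semantics of $W$ yields $\M, s \vDash W\psi$ iff $\M', s \vDash W\psi$. The main obstacle is pinning down the correct form of the key lemma: tracking only the first endpoint of added pairs would fail to propagate through the induction, because the witness $s$ for a freshly added pair may itself be available only in $R_n \setminus R$; it is precisely the symmetric tracking of \emph{both} endpoints, fuelled by secondary reflexivity of $\M$, that allows the induction to close.
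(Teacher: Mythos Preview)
Your proof is correct, but it takes a genuinely different route from the paper's.

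The paper does not form the Euclidean closure iteratively. Instead it sets, in one shot,
\[
R' \;=\; R \cup \{(y,z)\mid \exists x,x'\in S:\ xRy\text{ and }x'Rz\},
\]
i.e.\ $R' = R \cup (P\times P)$ where $P$ is the set of all $R$-successors. Euclideanness of $R'$ is immediate: if $sR't$ and $sR'u$, then in every case $t,u\in P$, hence $(t,u)\in R'$. The key invariance observation is equally direct: if $(s,t)\in R'\setminus R$ then $s\in P$, so $xRs$ for some $x$, and secondary reflexivity gives $sRs$; Fact~\ref{fact.imp} then disposes of the $W$-case at $s$. No induction on stages is required.

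Your construction instead produces the (smaller) least Euclidean extension of $R$, at the price of an inductive argument for the key lemma. Your insistence on tracking \emph{both} endpoints is well taken for that induction; in fact one can streamline it by first proving the second-endpoint version alone (which goes through, since the second endpoint of a new edge is again a second endpoint of one of the witnessing $R_n$-edges), and then deriving the first-endpoint version from it. Either way the dichotomy you state follows, and the invariance argument is identical to the paper's in spirit. In short: the paper's one-step closure is more economical, while your approach yields the minimal Euclidean $R'$; both are sound.
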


\begin{proof}
    Let $\M=\lr{S,R,V}$ be a secondarily reflexive model. Construct a model $\M'=\lr{S,R',V}$ such that $R'=R\cup\{(y,z)\mid xRy~\text{and}~x'Rz\text{~for some~}x,x'\in S\}$.
    
    First, $\M'$ is Euclidean. Let $s,t,u\in S$ such that $sR't$ and $sR'u$. The goal is to show $tR'u$. By definition of $R'$, we consider the following cases.
    \begin{itemize}
        \item $sRt$ and $sRu$. Then $tR'u$.
        \item ${\sim}sRt$ and $sRu$. Then $xRs$ and $x'Rt$ for some $x,x'\in S$. Then $tR'u$.
        \item $sRt$ and ${\sim}sRu$. Similar to the second case, we can show that $tR'u$.
        \item ${\sim}sRt$ and ${\sim}sRu$. Then $xRs$ and $x'Rt$ for some $x,x'\in S$, and $yRs$ and $y'Ru$ for some $y,y'\in S$. Then $tR'u$.
    \end{itemize}

    It remains only to show that for all $s\in S$, for all $\phi\in\mathcal{L}(W)$, we have
    $$\M,s\vDash\phi\text{~iff~}\M',s\vDash\phi.$$
    We proceed by induction on $\phi$. The nontrivial case is $W\phi$.

    Suppose that $\M,s\nvDash W\phi$, to show that $\M',s\nvDash W\phi$. By supposition, either $\M,s\vDash\phi$ or for some $t$ such that $sRt$ we have $\M,t\nvDash \phi$. By induction hypothesis and $R\subseteq R'$, $\M',s\vDash\phi$ or for some $t$ such that $sR't$ and $\M',t\nvDash \phi$. Thus $\M',s\nvDash W\phi$.

    Conversely, assume that $\M',s\nvDash W\phi$, to prove that $\M,s\nvDash W\phi$. By assumption, either $\M',s\vDash\phi$ or for some $t$ such that $sR't$ we have $\M',t\nvDash\phi$. If the first case holds, by induction hypothesis, we derive that $\M,s\vDash\phi$, thus $\M,s\nvDash W\phi$. If the second case holds, according to the definition of $R'$, we consider the following two cases.
    \begin{itemize}
        \item $sRt$. By $\M',t\nvDash\phi$ and induction hypothesis, $\M,t\nvDash\phi$. Thus $\M,s\nvDash W\phi$.
        \item $xRs$ and $x'Rt$ for some $x,x'\in S$. Since $xRs$ and $\M$ is secondarily reflexive, it follows that $sRs$. Then using Fact~\ref{fact.imp}, we conclude that $\M,s\nvDash W\phi$.
    \end{itemize}
\end{proof}

The reader may ask if the above statement can be extended to the case of transitivity and serial. That is, do we have the following: Every (serial,) transitive and secondarily reflexive model is $\mathcal{L}(W)$-equivalent to a (serial,) transitive and Euclidean model? We do not the answer. As we check, the construction $\M'$ in the proof of Prop.~\ref{prop.equivalentmodel} does not preserve transitivity. We will come back to this issue.

\weg{Similarly, we can show the following.
\begin{proposition}\label{prop.equi-4}
    Every transitive and secondarily reflexive model is $\mathcal{L}(W)$-equivalent to a transitive and Euclidean model.
\end{proposition}

\begin{proposition}\label{prop.d4}
    Every serial, transitive and secondarily reflexive model is $\mathcal{L}(W)$-equivalent to a serial, transitive and Euclidean model.
\end{proposition}}


\begin{theorem}\label{thm.comp-k5}
${\bf K5^W}$ is sound and strongly complete with respect to the class of Euclidean frames.
\end{theorem}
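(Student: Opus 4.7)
Soundness reduces to Thm.~\ref{thm.comp-k} together with the already-established validity of A5 on Euclidean frames. The work is in completeness. The plan is to build the canonical model $\M^c$ for ${\bf K5^W}$ exactly as in Def.~\ref{def.cm}, invoke the (already proven) Truth Lemma to obtain satisfiability inside $\M^c$, show that $R^c$ is secondarily reflexive, and then apply Prop.~\ref{prop.equivalentmodel} to convert $\M^c$ into an $\mathcal{L}(W)$-equivalent Euclidean model, keeping the point of satisfaction fixed.

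The only nontrivial step is secondary reflexivity, i.e.\ that $sR^ct$ implies $tR^ct$ for all $s,t\in S^c$. I would split into three cases mirroring the case-by-case definition of $R^c$. First, if $W\psi\in s$ for no $\psi$, then $sR^ct$ forces $s=t$ and $tR^ct$ is immediate. Second, if $W\psi\in s$ for some $\psi$ but $W\chi\in t$ for no $\chi$, the definition of $R^c$ again gives $tR^ct$ trivially. The substantive third case is when $W\psi\in s$ and $W\chi\in t$ both hold for some witnesses $\psi$ and $\chi$; here one must show that for every $\phi$, if $W(\phi\land\chi)\in t$ then $\phi\in t$.

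For this case the axiom A5 is tailor-made. Given such a $\phi$, I would distinguish whether $W(\phi\land\psi)\in s$: if so, $sR^ct$ immediately yields $\phi\in t$. Otherwise $\neg W(\phi\land\psi)\in s$, and A5 instantiated with this very $\chi$ gives
\[
W\bigl((W\chi\to\neg W(\phi\land\chi))\land\psi\bigr)\in s.
\]
Applying $sR^ct$ to the formula $W\chi\to\neg W(\phi\land\chi)$ (which is paired with the witness $\psi$) yields $W\chi\to\neg W(\phi\land\chi)\in t$, and since $W\chi\in t$ we conclude $\neg W(\phi\land\chi)\in t$, contradicting the assumption $W(\phi\land\chi)\in t$. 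So the second sub-case cannot occur, and $\phi\in t$ is forced.

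Once secondary reflexivity is in hand, the rest is automatic: given a ${\bf K5^W}$-consistent $\Gamma$, Lindenbaum gives an MCS $s\supseteq\Gamma$, the Truth Lemma gives $\M^c,s\vDash\Gamma$, and Prop.~\ref{prop.equivalentmodel} supplies an Euclidean $\M'=\lr{S^c,R',V^c}$ with $\M',s\vDash\Gamma$. The main obstacle, as described, is the A5-based argument in the third case of secondary reflexivity; in particular, finding the right instantiation of the schematic variables in A5, where the free $\chi$ in the axiom must be matched to a witness $W\chi\in t$ coming from $t$'s own contents rather than from $s$'s.
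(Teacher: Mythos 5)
Your proposal is correct and follows essentially the same route as the paper: soundness via the validity of A5, completeness by showing the canonical model is secondarily reflexive (with the same case split and the same instantiation of A5, taking $\chi$ to be a witness $W\chi\in t$ and $\psi$ a witness $W\psi\in s$), and then transferring satisfaction to an Euclidean model via Prop.~\ref{prop.equivalentmodel}. The only cosmetic difference is that you argue the third case directly by sub-cases on whether $W(\phi\land\psi)\in s$, whereas the paper runs the same argument as a reductio.
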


\begin{proof}
    By Prop.~\ref{prop.sound-k5}, it suffices to show the completeness of ${\bf K5^W}$. For this, define $\M^c$ w.r.t. ${\bf K5^W}$ as in Def.~\ref{def.cm}. Firstly, we show that $\M^c$ is secondarily reflexive,
    that is, the following holds:
    \begin{center}
    (*) for all $s,t\in S^c$, if $sR^ct$ then $tR^ct$.
    \end{center}
     
    Let $s,t\in S^c$. Suppose that $sR^ct$, to show that $tR^ct$. According to the definition of $R^c$, we consider the following cases.
    \begin{itemize}
        \item There is no $\psi$ such that $W\psi\in s$. Then $s=t$. Thus $tR^ct$.
        \item There is no $\psi'$ such that $W\psi'\in t$. Then as $t=t$, we also have $tT^ct$.
        \item $W\psi\in s$ and $W\psi'\in t$ for some $\psi$ and $\psi'$. If it fails that $tR^ct$, according to the definition of $R^c$, it follows that for some $\phi$, $W(\phi\land\psi')\in t$ and $\phi\notin t$. As $sR^ct$, we must have $W(\phi\land\psi)\notin s$, thus $\neg W(\phi\land\psi)\in s$. Using axiom A5, we infer that $W((W\psi'\to\neg W(\phi\land\psi'))\land\psi)\in s$. Using $sR^ct$ again, we derive that $W\psi'\to \neg W(\phi\land\psi')\in t$, thus $\neg W(\phi\land\psi')\in t$, which is contrary to $W(\phi\land\psi')\in t$ and the consistency of $t$.
    \end{itemize}
    We have thus shown (*). This implies that $\M^c$ is a secondarily reflexive model. That is to say, every consistent set is satisfiable in a secondarily reflexive model.

    Now by Prop.~\ref{prop.equivalentmodel}, we obtain that every consistent set is satisfiable in an Euclidean model, as desired.
\end{proof}

It may be worth remarking that axiom A4 is provable in ${\bf K5^W}$, because it is valid on the class of Euclidean frames.
\begin{proposition}
    A4 is valid on the class of Euclidean frames.
\end{proposition}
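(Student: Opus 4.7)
The plan is to verify A4 by a direct semantic argument. Fix a Euclidean model $\M=\lr{S,R,V}$ and a state $s\in S$ with $\M,s\vDash W\psi\land W(\phi\land\psi)$. The goal is then $\M,s\vDash W((W\chi\to W(\phi\land\chi))\land\psi)$, which by the semantics of $W$ splits into showing that $(W\chi\to W(\phi\land\chi))\land\psi$ is refuted at $s$ but holds throughout $R(s)$.

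The refutation at $s$ is immediate: $\M,s\vDash W\psi$ forces $\M,s\nvDash\psi$, so the second conjunct already fails at $s$, regardless of $\chi$ or $\phi$.

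The heart of the argument is the observation that Euclideanness collapses very strongly at successors of $s$: applying the Euclidean condition $\forall x\forall y\forall z(xRy\land xRz\to yRz)$ with $y=z=t$ to any $sRt$ yields $tRt$. Hence every successor of $s$ is reflexive. By Fact~\ref{fact.imp}, this forces $\M,t\nvDash W\chi$ for every $\chi$, so the implication $W\chi\to W(\phi\land\chi)$ is vacuously true at each such $t$. Since $\M,s\vDash W\psi$ also gives $R(s)\vDash\psi$, I conclude that $\M,t\vDash (W\chi\to W(\phi\land\chi))\land\psi$ for every $t$ with $sRt$, which is exactly the universal clause required by the semantics of $W$.

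The only edge case is $R(s)=\emptyset$, but there the universal clause is vacuous and the earlier refutation at $s$ already suffices. There is no real obstacle here: once one notices that Euclideanness forces successors of $s$ to be reflexive, Fact~\ref{fact.imp} does all the work, and the specific inner structure involving $\phi$ and $\chi$ in A4 plays essentially no role in the argument.
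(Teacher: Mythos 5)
Your proof is correct and rests on exactly the same key observation as the paper's: Euclideanness forces every successor $t$ of $s$ to be reflexive, so Fact~\ref{fact.imp} kills $W\chi$ at $t$ and makes the inner implication vacuously true, while $W\psi$ at $s$ supplies both the refutation at $s$ and $\psi$ throughout $R(s)$. The only cosmetic difference is that you argue directly where the paper argues by contradiction, and your closing remark that $W(\phi\land\psi)$ plays no role matches the paper's own observation that a stronger version of A4 is Euclidean-valid.
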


\begin{proof}
    Let $\M=\lr{S,R,V}$ be an Euclidean model and $s\in S$. Suppose, for a contradiction, that $\M,s\vDash W\psi\land W(\phi\land\psi)$ and $\M,s\nvDash W((W\chi\to W(\phi\land \chi))\land\psi)$. Then $\M,s\nvDash\psi$, thus $s\nvDash (W\chi\to W(\phi\land \chi))\land\psi$. It then follows that there exists $t$ such that $sRt$ and $\M,t\nvDash (W\chi\to W(\phi\land \chi))\land\psi$. Moreover, as $s\vDash W\psi$, $t\vDash\psi$, thus $t\nvDash W\chi\to W(\phi\land\chi)$, and hence $t\vDash W\chi$. However, since $sRt$ and $R$ is Euclidean, $tRt$. By Fact~\ref{fact.imp}, we should have also $t\nvDash W\chi$. A contradiction.
\end{proof}

So ${\bf K4^W}\subseteq {\bf K5^W}$. Note that in the above proof, $s\vDash W(\phi\land\psi)$ is not needed. This means that a stronger version of A4, that is, $W\psi\to W((W\chi\to W(\phi\land \chi))\land\psi)$ is valid over the class of Euclidean frames, thus provable in ${\bf K5^W}$. In contrast, this formula is not valid over the class of transitive frames (as one may verify), thus not provable in ${\bf K4^W}$. This establishes that ${\bf K4^W}\subset {\bf K5^W}$.

Moreover, ${\bf K45^W}={\bf K5^W}$, where ${\bf K45^W}$ is the extension of ${\bf K5^W}$ with the axiom A4. As a consequence, we have another completeness result.
\begin{theorem}
    ${\bf K45^W}$ is sound and strongly complete with respect to the class of Euclidean frames.
\end{theorem}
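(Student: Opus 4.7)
The plan is to observe that this theorem is essentially a free corollary of what has already been established, so no new canonical model argument is needed. First I would note that by definition, ${\bf K45^W}$ is obtained from ${\bf K5^W}$ by adding axiom A4. Thus every ${\bf K5^W}$-theorem is a ${\bf K45^W}$-theorem, and conversely every ${\bf K45^W}$-theorem that can already be derived without A4 is in ${\bf K5^W}$.

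The key step is then to show that A4 is \emph{redundant} over ${\bf K5^W}$, i.e.\ that $\vdash_{{\bf K5^W}} \text{A4}$. For this I would invoke the validity result established just before the theorem, namely that A4 is valid on the class of Euclidean frames, together with the completeness half of Thm.~\ref{thm.comp-k5}, which says that ${\bf K5^W}$ is strongly complete with respect to that class. These two facts together immediately yield derivability of A4 in ${\bf K5^W}$, so the two systems have exactly the same theorems and the same consistent sets.

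Finally, since ${\bf K45^W}$ and ${\bf K5^W}$ coincide as deductive systems, soundness and strong completeness of ${\bf K45^W}$ with respect to Euclidean frames follow directly from Thm.~\ref{thm.comp-k5}. There is essentially no obstacle here; the only thing to be slightly careful about is the direction of the argument: one should derive A4 semantically (via validity plus completeness of the weaker system) rather than attempt a syntactic derivation inside ${\bf K5^W}$, since producing A4 by hand from A5 and the ${\bf K^W}$-rules would be tedious and is unnecessary for the stated conclusion.
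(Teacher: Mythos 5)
Your proposal is correct and matches the paper's own argument: the paper likewise notes that A4 is valid on Euclidean frames, concludes via the completeness of ${\bf K5^W}$ (Thm.~\ref{thm.comp-k5}) that A4 is provable there, hence ${\bf K45^W}={\bf K5^W}$, and reads off the theorem from Thm.~\ref{thm.comp-k5}. No gaps.
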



\begin{theorem}\label{thm.comp-k5-45}
    ${\bf K5^W}(={\bf K45^W})$ is sound and strongly complete with respect to the class of transitive and Euclidean frames.
\end{theorem}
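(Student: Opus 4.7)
Soundness is immediate: every transitive and Euclidean frame is in particular Euclidean, so Thm.~\ref{thm.comp-k5} already delivers it.

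For strong completeness I would start from the canonical model $\M^c=\lr{S^c,R^c,V^c}$ of ${\bf K5^W}$ defined exactly as in Def.~\ref{def.cm}. Two observations carry over for free: since A4 has just been shown provable in ${\bf K5^W}$, the argument in Thm.~\ref{thm.comp-k4} makes $R^c$ transitive, and the argument labelled (*) inside Thm.~\ref{thm.comp-k5} makes $R^c$ secondarily reflexive. So $\M^c$ is already transitive and secondarily reflexive. However, the transformation of Prop.~\ref{prop.equivalentmodel} need not preserve transitivity, so Thm.~\ref{thm.comp-k5} cannot simply be re-run.

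My plan is to localise to a point-generated submodel and enrich its successor set into an Euclidean cluster. Given a ${\bf K5^W}$-consistent set $\Gamma$, extend to an MCS $s_0$ and put
\[
S^\ast=\{s_0\}\cup R^c(s_0),\qquad R^\ast=\bigl(R^c\cap(S^\ast\times S^\ast)\bigr)\cup\bigl(R^c(s_0)\times R^c(s_0)\bigr),
\]
with $V^\ast$ the restriction of $V^c$ to $S^\ast$. Transitivity of $R^c$ yields $R^c(t)\subseteq R^c(s_0)$ for every $t\in R^c(s_0)$, from which a short case check gives $R^\ast(s_0)=R^c(s_0)$ and $R^\ast(t)=R^c(s_0)$ for every $t\in R^c(s_0)$. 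Transitivity and Euclideanness of $R^\ast$ then follow mechanically, because any $xR^\ast y$ forces $y\in R^c(s_0)$ and the added clique then absorbs all remaining edges.

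The main obstacle is truth-preservation in the induction on $\phi\in\mathcal{L}(W)$, whose only nontrivial clause is $W\psi$. One might fear that enlarging the accessibility relation inside $R^c(s_0)$ alters the meaning of $W$ there, but Fact~\ref{fact.imp} disarms this: secondary reflexivity of $R^c$ already makes every $w\in R^c(s_0)$ reflexive in $\M^c$, and the added clique makes it reflexive in $\M^\ast$, so $W\psi$ is vacuously false at $w$ in both models. At $w=s_0$ the identity $R^\ast(s_0)=R^c(s_0)$ together with the induction hypothesis reduces the $\M^\ast$-clause for $W\psi$ directly to the $\M^c$-clause. Consequently $\M^\ast,s_0\vDash\Gamma$ and $\lr{S^\ast,R^\ast}$ is transitive and Euclidean, as required. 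The whole point of passing to the point-generated submodel is precisely to secure the reflexivity of every non-root world, which is exactly what turns the enlargement of $R^c$ into a harmless operation --- the very degree of freedom that was missing from Prop.~\ref{prop.equivalentmodel} when one tried to keep transitivity globally.
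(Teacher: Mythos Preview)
Your proposal is correct and follows essentially the same route as the paper: both restrict the canonical model to the part generated by the chosen MCS, enlarge the relation by the full clique on $R^c(s_0)$, and use secondary reflexivity together with Fact~\ref{fact.imp} to get truth-preservation of $W$-formulas. The only cosmetic difference is that the paper passes explicitly through the generated submodel (invoking the generated-submodel theorem for $\mathcal{L}(\Box)$) before adding the clique, whereas you fold both steps into one construction and compare directly with $\M^c$.
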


\begin{proof}
    The soundness is direct from Thm.~\ref{thm.comp-k5}.

    For the completeness, define $\M^c$ w.r.t. ${\bf K5^W}$ as in Def.~\ref{def.cm}. We have shown that $\M^c$ is transitive (Thm.~\ref{thm.comp-k4}) and secondarily reflexive (Thm.~\ref{thm.comp-k5}). This entails that every consistent set, say $\Gamma$, is satisfiable in a transitive and secondarily reflexive model, say $(\M,s)$. Let $\M'=\lr{S,R,V}$ is the submodel of $\M$ generated by $s$. By the generated submodel theorem for standard modal logic $\mathcal{L}(\Box)$, we have $\M',s\vDash \Gamma$. Now construct a new model $\mathcal{N}=\lr{S,R',V}$ such that $R'=R\cup(Z(s)\times Z(s))$, where $Z(s)=\{x\mid sRx\}$. We can see that $\mathcal{N}$ is transitive and Euclidean. 
    
    It remains only to show that for all $x\in S$, for all $\phi\in\mathcal{L}(W)$, $\M',x\vDash\phi$ iff $\mathcal{N},x\vDash\phi$. We proceed by induction on $\phi$. The only nontrivial case is $W\phi$.

    Suppose that $\M',x\nvDash W\phi$. Then $\M',x\vDash\phi$ or for some $y$ such that $xRy$ and $\M',y\nvDash\phi$. By induction hypothesis and $R\subseteq R'$, $\mathcal{N},x\vDash\phi$ or for some $y$ such that $xR'y$ and $\M',y\nvDash\phi$. Thus $\mathcal{N},x\nvDash W\phi$.

    Conversely, assume that $\mathcal{N},x\nvDash W\phi$. Then $\mathcal{N},x\vDash\phi$ or for some $y$ such that $xT'y$ and $\mathcal{N},y\nvDash\phi$. If $\mathcal{N},x\vDash\phi$, by induction hypothesis, $\mathcal{M}',x\vDash\phi$, thus $\M',x\nvDash W\phi$. If for some $y$ such that $xR'y$ and $\mathcal{N},y\nvDash\phi$, according to the defintion of $R'$, we consider two cases.
    \begin{itemize}
        \item $xRy$. Then by induction hypothesis and $\mathcal{N},y\nvDash\phi$, we have $\M',y\nvDash\phi$, and then $\M',x\nvDash W\phi$.
        \item $(x,y)\in Z(s)\times Z(s)$. Then $x\in Z(s)$, that is, $sRx$. Since $R$ is secondarily reflexive (note that the property of secondary reflexivity is preserved under generated submodels), it follows that $xRx$. By Fact~\ref{fact.imp}, $\M',x\nvDash W\phi$.
    \end{itemize}

    Since $\M',s\vDash\Gamma$, we infer that $\mathcal{N},s\vDash\Gamma$. Thus $\Gamma$ is satisfiable in a transitive and Euclidean model, as desired.
\end{proof}

Similarly, we can show the following. Let ${\bf KD5^W}$ is the extension of ${\bf K5^W}$ with the axiom $\neg W\bot$.
\begin{theorem}\label{thm.comp-kd5}
    ${\bf KD5^W}={\bf KD45^W}$ is sound and strongly complete with respect to the class of serial, transitive and Euclidean frames.
\end{theorem}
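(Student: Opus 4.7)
The plan is to mirror the argument of Thm.~\ref{thm.comp-k5-45}, adding seriality as an additional ingredient that piggybacks on the existing construction. First I would observe that ${\bf KD5^W}={\bf KD45^W}$ is immediate from the fact (just established) that A4 is provable in ${\bf K5^W}$, and hence in ${\bf KD5^W}$. Soundness over the class of serial, transitive and Euclidean frames then follows by packaging together three facts already proved above: A5 is valid on every Euclidean frame, A4 is valid on every Euclidean frame, and AD is valid on every serial frame.

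For strong completeness I would build the canonical model $\M^c$ for ${\bf KD5^W}$ exactly as in Def.~\ref{def.cm}. The relation $R^c$ inherits three properties from the preceding theorems: it is serial by the argument of Thm.~\ref{thm.comp-d} (using AD), transitive by the argument of Thm.~\ref{thm.comp-k4} (using the derivable axiom A4), and secondarily reflexive by the argument of Thm.~\ref{thm.comp-k5} (using A5). Consequently, any ${\bf KD5^W}$-consistent set $\Gamma$ is satisfied at some state $s$ in a serial, transitive, secondarily reflexive model. Passing to the submodel $\M'$ generated by $s$ keeps all three properties intact (seriality and transitivity are preserved under generated submodels in the standard way, and secondary reflexivity is preserved trivially), and, by the generated submodel theorem, preserves the truth of every $\mathcal{L}(W)$-formula at every point of $\M'$.

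Finally I would apply the same ``clique'' construction used in Thm.~\ref{thm.comp-k5-45}: set $\mathcal{N}=\lr{S,R',V}$ with $R'=R\cup(Z(s)\times Z(s))$, where $Z(s)=\{x\mid sRx\}$. The verifications that $\mathcal{N}$ is transitive and Euclidean, and that the truth of every $\phi\in\mathcal{L}(W)$ is preserved between $\M'$ and $\mathcal{N}$, carry over verbatim from the proof of Thm.~\ref{thm.comp-k5-45}; in particular, the crucial step in which $(x,y)\in Z(s)\times Z(s)$ is handled via secondary reflexivity together with Fact~\ref{fact.imp}. Seriality of $\mathcal{N}$ comes for free: since $R\subseteq R'$ and $R$ is serial, so is $R'$. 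Hence $\Gamma$ is satisfied in $\mathcal{N}$ at $s$, and $\mathcal{N}$ is serial, transitive and Euclidean, as required.

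The hard part, if any, is really only conceptual --- recognising that all of the earlier machinery can be reused without modification. Mechanically, no step raises a new obstacle beyond those already overcome in the proofs of Thm.~\ref{thm.comp-d}, Thm.~\ref{thm.comp-k4} and especially Thm.~\ref{thm.comp-k5-45}; one only needs to notice that seriality is preserved both by the passage to a generated submodel and by the enlargement $R\mapsto R\cup(Z(s)\times Z(s))$, so that the Euclidean transformation is fully compatible with the AD axiom.
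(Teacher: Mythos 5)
Your proposal is correct and follows essentially the same route as the paper, which proves this theorem by the same adaptation of Thm.~\ref{thm.comp-k5-45}: canonical model (serial, transitive, secondarily reflexive), generated submodel, then the clique construction $R'=R\cup(Z(s)\times Z(s))$, noting that seriality and transitivity survive both steps. No gaps.
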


Going back to the discussion after Prop.~\ref{prop.equivalentmodel}, although the construction $\M'$ in the proof of Prop.~\ref{prop.equivalentmodel} does not preserve transitivity, this property is indeed preserved under generated submodels and the construction of $\mathcal{N}$ in Thm.~\ref{thm.comp-k5-45} and also Thm.~\ref{thm.comp-kd5}. 


In a similar vein, by translating axiom $B$ (viz. $\neg\phi\to \Box\neg\Box\phi$) via the translation induced by Almost Definability Schema, we can obtain an axiom $W\psi\land\neg\phi\to W((W\chi\to\neg W(\phi\land\chi))\land\psi)$ of $\mathcal{L}(W)$ (denoted AB) over symmetric frames. One may verify that AB is valid over the class of symmetric frames.

\weg{\subsection{Adding public announcements}

$[\psi]W\phi\lra (\psi\to W[\psi]\phi)$

This is in line with the situation under intersection semantics~\cite{Fan:2022neighborhood}.

$\vDash [Wp]Wp$
This characterizes a self-opinioned agent.}

\weg{\section{Factive Ignorance}

\[
\begin{array}{ll}
    \text{PC} & \text{all instances of propositional tautologies} \\
    \text{FI1} & I_f\phi\to \phi\\
    \text{FI2} & I_f\phi\land I_f\psi\to I_f(\phi\vee\psi)\\
    \text{FI-R} & \dfrac{\phi\to\psi}{I_f\psi\land\phi\to I_f\phi}\\
\end{array}
\]

\[
\begin{array}{ll}
   \text{FI-4}  &  I_f\psi\land I_f(\phi\vee\psi)\land\chi\to I_f\neg((I_f\chi\to I_f(\phi\vee\chi))\land\neg\psi) \\
\end{array}
\]

In~\cite[Sect.~1.4]{Gilbertetal:2022}, the authors compare the expressive power of $\mathcal{L}(I_f)$ and $\mathcal{L}(\Box)$. It turns out that neither of $I_f$ and $\Box$ is, in general, definable in terms of the other. In particular, it is shown in~\cite[Coro.~1.31]{Gilbertetal:2022} that the indefinability of $\Box$ in terms of $I_f$ applies to a wide variety of frame classes. In the meanwhile, the authors ask whether there exist any interesting classes of frames in which $\Box$ is definable in terms of $I$ and they think the answer is negative (see~\cite[p.~878]{Gilbertetal:2022}). However, the answer is actually positive, since there is indeed a (quite simple, and interesting, hopefully) class of frames where $\Box$ is definable in terms of $I_f$. We resort to a notion of {\em narcissistic} from~\cite[Def.~2.1]{Steinsvold:falsebelief}.
\begin{definition}
    Call $s$ {\em narcissistic} if and only if $s$ relates to itself and only to itself. Call a frame {\em narcissistic} if all the worlds are narcissistic; that is,
    $$\forall x(xRx\land \forall y(xRy\to x=y)).$$
\end{definition}

\begin{proposition}
    On the class of narcissistic frames, $\Box$ is definable in terms of $I_f$.
\end{proposition}

\begin{proof}
    Given any narcissistic model $\M$ and any state $s$ in $\M$, we have the following:
    \[
    \begin{array}{ll}
         &  \M,s\vDash\Box\phi \\
       \text{iff}  & \M,s\vDash\phi\\
       \text{iff} & \M,s\vDash I_f\phi,\\
    \end{array}
    \]
where both equivalences follow from the fact that $s$ is narcissistic.
\end{proof}}

\section{Radical Ignorance}\label{sec.radicalignorance}

\begin{definition}[Language] The language of the logic of radical ignorance, denoted $\mathcal{L}(I_R)$, is defined recursively as follows:
$$\phi::=p\mid \neg\phi\mid (\phi\land\phi)\mid \I_R\phi.$$
\end{definition}

Intuitively, $\I^R\phi$ is read ``one is {\em Rumsfeld ignorant of} $\phi$''. Other connectives are defined as usual.


\weg{\begin{definition}[Structure]
A tuple $\M=\lr{S,R,T,V}$ is said to be a {\em model}, if .
\end{definition}}

The notions of models and frames are defined as in Def.~\ref{def.structure}, and the semantics of $\mathcal{L}(I_R)$ is defined as in Def.~\ref{def.semantics}, except that
\[
\begin{array}{lll}
\M,s\vDash\I_R\phi & \text{iff}& \text{{\em either} }(R(s)\vDash\phi\text{ and } \M,s\nvDash\phi)\\
    &&\text{\em or }(R(s)\vDash\neg\phi\text{ and }\M,s\vDash\phi). \\
\end{array}
\]

\weg{\begin{definition}[Semantics]
Given a model $\M=\lr{S,T,V}$ and a state $s\in S$, the semantics of $\mathcal{L}(I_R)$ is defined recursively as follows.
\[
\begin{array}{|lll|}
\hline
    \M,s\vDash p & \text{iff} & s\in V(p)\\
    \M,s\vDash\neg\phi & \text{iff} & \M,s\nvDash\phi\\
    \M,s\vDash\phi\land\psi&\text{iff} & \M,s\vDash\phi\text{ and }\M,s\vDash\psi\\
\M,s\vDash\I_R\phi & \text{iff}& \text{{\em either} }(T(s)\vDash\phi\text{ and } \M,s\nvDash\phi)\\
    &&\text{\em or }(T(s)\vDash\neg\phi\text{ and }\M,s\vDash\phi) \\
    \hline
\end{array}
\]
\end{definition}
}

Note that the semantics of $\I_R\phi$ is equivalent to the following one.
\[
\begin{array}{lll}
\M,s\vDash\I_R\phi & \text{iff}  &(R(s)\vDash\phi\text{ or }\M,s\vDash\phi)\text{ and}\\
&&(R(s)\vDash\neg\phi\text{ or }\M,s\vDash\neg\phi)  \\ 
\end{array}
\]
\weg{\[
\begin{array}{lll}
\M,s\vDash\I_R\phi & \text{iff}  &\M,s\vDash\I\phi\text{ and }(T(s)\vDash\phi\text{ or }\M,s\vDash\phi)\text{ and}\\
&&(T(s)\vDash\neg\phi\text{ or }\M,s\vDash\neg\phi)  \\ 
\end{array}
\]}
We will use the two semantics of $\I_R$ interchangeably.
\weg{\[
\begin{array}{lll}
\M,s\vDash\I_R\phi & \text{iff}& \text{{\em either} }(T(s)\vDash\phi\text{ and } \M,s\nvDash\phi)\\
    &&\text{\em or }(T(s)\vDash\neg\phi\text{ and }\M,s\vDash\phi) \\
\end{array}
\]

\[
\begin{array}{lll}
\M,s\vDash\I_R\phi & \text{iff}  &(T(s)\vDash\phi\text{ or }\M,s\vDash\phi)\text{ and}\\
&&(T(s)\vDash\neg\phi\text{ or }\M,s\vDash\neg\phi)  \\ 
\end{array}
\]}

Recall that the operator $W$ of false belief is interpreted as follows:
\[
\begin{array}{lll}
    \M,s\vDash W\phi & \text{iff} & R(s)\vDash\phi\text{ and }\M,s\nvDash\phi.\\ 
\end{array}
\]

One may check that the operators of radical ignorance and of false belief are interdefined with each other, as $\vDash I_R\phi\lra (W\phi\vee W\neg\phi)$ and $\vDash W\phi\lra (I_R\phi\land\neg\phi)$. This may indicate that one can translate the results about false belief into those about radical ignorance via the translation induced by the interdefinability of the operators. Unfortunately, this holds for all but proof systems. 
Here we illustrate this with the axiomatizations of the logic of radical ignorance over all frames and over serial and transitive frames. 

\weg{\subsection{Minimal Logic}\label{sec.minimallogic}

\subsubsection{Proof system and Soundness}

$\text{TAUT}$ all instances of propositional tautologies

$\text{RI-Equ}$ $I_R\phi\lra \I_R\neg\phi$

$\text{I-Equ}$ $\I\phi\lra \I\neg\phi$

\text{I-Con} $\I(\phi\land\psi)\to (\I\phi\vee\I\psi)$\\
    \text{I-Dis}
    $\I(\phi\vee\psi)\land\I(\neg\phi\vee\chi)\to\I\phi$

$\text{RI-I}$ $\I_R\phi\to\I\phi$

$\text{R-NI}$ $\dfrac{\phi}{\neg\I\phi}$

$\dfrac{\phi_1\land\cdots\land\phi_n\to \bot}{\I_R\phi_1\land\neg\phi_1\land\cdots\land\I_R\phi_n\land\neg\phi_n\to\bot}$

\begin{proposition} Let $n$ be a natural number.
Then the following rule is derivable in $\mathbb{RIIK}$.
$$\dfrac{\psi_1\land\cdots\land\psi_n\to\phi}{\I\phi\land\neg\phi\land(\I_R\psi_1\land\neg\psi_1)\land\cdots\land(\I_R\psi_n\land\neg\psi_n)\to\I_R\phi}$$
\end{proposition}

\begin{proof}
By induction on $n$.
\begin{itemize}
    \item Base case $n=0$. The rule is equivalent to $\dfrac{\phi}{\I\phi\land\neg\phi\to\I_R\phi}$. Suppose that $\vdash\phi$, then by the rule $\text{R-NI}$, $\vdash\neg\I\phi$. Moreover, by $\text{TAUT}$, $\vdash\neg\I\phi\land\I\phi\land\neg\phi\to\I_R\phi$. Thus $\vdash\I\phi\land\neg\phi\to\I_R\phi$.
    \item Inductive case. Suppose by induction hypothesis (IH) that the statement holds for the case $n$, we show that it also holds for the case $n+1$. We have the following proof sequence in $\mathbb{RIIK}$.
    \[
    \begin{array}{lll}
        (1) & \psi_1\land\cdots\land\psi_n\land\psi_{n+1}\to\phi & \text{premise} \\
         & 
    \end{array}
    \]
\end{itemize}
\end{proof}

\subsubsection{Completeness}

\begin{definition}
A tuple $\M=\lr{S^c,R^c,T^c,V^c}$ is said to be the {\em canonical model} for $\mathbb{RIIK}$, if
\begin{itemize}
    \item $S^c=\{s\mid s\text{ is a maximal consistent set for }\mathbb{RIIK}\}$.
    \item $sR^ct$ iff there exists $\chi$ such that 
    \begin{itemize}
        \item[(1)] $\I\chi\in s$ and 
        \item[(2)] for all $\phi$, if $\neg\I\phi\land\neg\I(\phi\vee\chi)\in s$, then $\phi\in t$.
    \end{itemize}
    \item $sT^ct$ iff for all $\phi$, if $\I_R\phi\land\neg\phi\in s$, then $\phi\in t$.
    \item $V^c(p)=\{s\in S^c\mid p\in s\}$.
\end{itemize}
\end{definition}

\begin{proposition}

\end{proposition}

\begin{lemma}[Truth Lemma]
For all $s\in S^c$, for all $\phi\in \mathcal{L}(I^R)$, we have
$$\M^c,s\vDash\phi\iff \phi\in s.$$
\end{lemma}

\begin{proof}
By induction on $\phi$, where the nontrivial cases are $\I\phi$ and $\I_R\phi$. The case $\I\phi$ is shown as in~\cite[Lemma~4.6]{Fanetal:2015}. It remains only to check the case $\I_R\phi$.

Suppose that $\I_R\phi\in s$ (thus $\I_R\neg\phi\in s$ by RI-Equ), to show that $\M^c,s\vDash\I_R\phi$. By induction hypothesis, we show that 
\begin{itemize}
    \item[(i)] there exists $t\in S^c$ such that $sR^ct$ and $\phi\in t$, and there exists $u\in S^c$ such that $sR^cu$ and $\phi\notin u$, and
    \item[(ii)] 
    if $\phi\notin s$, then for all $x\in S^c$ such that $sT^cx$, we have $\phi\in x$, and if $\phi\in s$, then for all $y\in S^c$ such that $sT^cy$, we have $\phi\notin y$.
\end{itemize}

(ii) is straightforward from the supposition and the definition of $T^c$.

Moreover, by supposition and the axiom $\text{RI-I}$, we have $\I\phi\in s$. Then similar to the proof of the `only if' part in~\cite[Lemma~4.6]{Fanetal:2015}, we can show (i).

\medskip

Conversely, assume that $\I_R\phi\notin s$, to prove that $\M^c,s\nvDash\I_R\phi$. By induction hypothesis, it suffices to show at least one of the following fails:
\begin{itemize}
    \item[(a)] there exists $t\in S^c$ such that $sR^ct$ and $\phi\in t$, and there exists $u\in S^c$ such that $sR^cu$ and $\phi\notin u$,
    \item[(b)] {\em either} $\phi\notin s$ and for all $x\in S^c$ such that $sT^cx$, we have $\phi\in x$, {\em or} $\phi\in s$ and for all $y\in S^c$ such that $sR^cy$, we have $\phi\notin y$.
\end{itemize}

We suppose that (a) holds, it remains only to show that (b) fails, that is, the following (b1) and (b2) hold.
\begin{itemize}
    \item[(b1)] if $\phi\notin s$, then for some $x\in S^c$ such that $sT^cx$, we have $\phi\notin x$, {\em and}
    \item[(b2)] if $\phi\in s$, then for some $y\in S^c$ such that $sT^cy$, we have $\phi\in y$.
\end{itemize}

By (a) and the proof of the `if' part in~\cite[Lemma~4.6]{Fanetal:2015}, we can show that $\I\phi\in s$.

If $\phi\notin s$, by Lindenbaum's Lemma, we need only show that $\{\psi\mid \I_R\psi\land\neg\psi\in s\}\cup\{\neg\phi\}$ (denoted $\Gamma$) is consistent. If $\{\psi\mid \I_R\psi\land\neg\psi\in s\}$ is empty, then $\Gamma=\{\neg\phi\}\subseteq s$ is clearly consistent. In what follows, we assume that $\{\psi\mid \I_R\psi\land\neg\psi\in s\}$ is nonempty.

If $\Gamma$ is inconsistent, then there are $\psi_1,\cdots,\psi_n$ such that $\I_R\psi_i\land\neg\psi_i\in s$ for all $1\leq i\leq n$ and $\vdash\psi_1\land\cdots\land\psi_n\to\phi$. Then by ..., it follows that
$$\vdash \I\phi\land\neg\phi\land(\I_R\psi_1\land\neg\psi_1)\land\cdots\land(\I_R\psi_n\land\neg\psi_n)\to\I_R\phi.$$
We have already shown that $\I\phi\in s$ and $\neg\phi\in s$. Thus we conclude that $\I_R\phi\in s$, contradicting the assumption.

We have therefore completed the proof of (b1).

Similarly, we can show (b2).
\end{proof}}

\subsection{Minimal logic}\label{sec.minimallogic}

The minimal logic of $\mathcal{L}(I_R)$, denoted ${\bf K^{RI}}$, consists of the following axioms and inference rules:
\[
\begin{array}{ll}
    \text{A0} & \text{all instances of propositional tautologies} \\
    \text{RI-Equ} & \I_R\phi\lra \I_R\neg\phi\\
    \text{RI-Con} & \I_R\phi\land\neg\phi\land\I_R\psi\land\neg\psi\to \I_R(\phi\land\psi)\\
    \text{MP} & \dfrac{\phi,\phi\to\psi}{\psi}\\
    \text{RI-R} & \dfrac{\phi\to\psi}{\I_R\phi\land\neg\phi\to\I_R\psi\vee\psi}\\
\end{array}
\]

Note that the above axioms and inference rules can be obtained from those of ${\bf K^W}$ by a translation induced by the definability of $W$ in terms of $I_R$, that is $\vDash W\phi\lra (I_R\phi\land\neg\phi)$, except for axiom RI-Equ. Actually, the translation only gives us an incomplete proof system, since the axiom RI-Equ is valid, but not provable in the translated system. To see this, consider an auxiliary semantics in which all
formulas of the form $I_R\phi$ are interpreted as $\phi$. Under this semantics, the translated system are sound, but RI-Equ is not valid.

\begin{proposition}\label{prop.admissible-kri}
    The following rule is admissible in ${\bf K^{RI}}$:
    $$\dfrac{\phi\to\psi}{I_R(\phi\land\chi)\land\neg(\phi\land\chi)\to I_R\psi\vee\psi}.$$
\end{proposition}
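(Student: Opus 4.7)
The plan is to mimic the proof of Prop.~\ref{prop.imp} for the $W$-case, using the rule RI-R as the main engine in place of R1. Concretely, the idea is to apply RI-R twice: once to the hypothesis $\phi\to\psi$, and once to the propositional tautology $\phi\land\chi\to\phi$, and then glue the outputs together via propositional reasoning.

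First I would apply RI-R to the hypothesis $\phi\to\psi$ to obtain
\[
I_R\phi\land\neg\phi\to I_R\psi\vee\psi. \tag{$\ast$}
\]
Next, since $\phi\land\chi\to\phi$ is an instance of A0, another application of RI-R yields
\[
I_R(\phi\land\chi)\land\neg(\phi\land\chi)\to I_R\phi\vee\phi. \tag{$\ast\ast$}
\]
Now I would assume the antecedent $I_R(\phi\land\chi)\land\neg(\phi\land\chi)$ and use ($\ast\ast$) to conclude $I_R\phi\vee\phi$. From here a purely propositional case split finishes the job: if $\phi$ holds, then by the hypothesis $\phi\to\psi$ we get $\psi$, hence $I_R\psi\vee\psi$; if on the other hand $\neg\phi$ holds, then from $I_R\phi\vee\phi$ we must have $I_R\phi$, so $I_R\phi\land\neg\phi$ holds, and ($\ast$) delivers $I_R\psi\vee\psi$. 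Discharging the assumption gives the desired implication.

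I do not anticipate a real obstacle: the argument is essentially the $I_R$-analogue of the chain of steps $(1)$–$(6)$ in the proof of Prop.~\ref{prop.imp}, and all nontrivial modal moves are absorbed into the two applications of RI-R. The only mildly delicate point is that RI-R produces a \emph{disjunction} $I_R\psi\vee\psi$ in its conclusion rather than a single $W\psi$, so the gluing step in the last paragraph of the $W$-proof (which was a straightforward propositional chain) is replaced here by a two-case split on whether $\phi$ holds. I would present the argument as a short numbered proof sequence paralleling the one in Prop.~\ref{prop.imp}, with the case split made explicit (or folded into a single tautological implication $(I_R\phi\vee\phi)\land(\phi\to\psi)\land(I_R\phi\land\neg\phi\to I_R\psi\vee\psi)\to I_R\psi\vee\psi$), so that only axiom A0, the hypothesis, and two uses of RI-R are invoked.
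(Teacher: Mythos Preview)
Your proposal is correct and essentially identical to the paper's proof: both apply RI-R once to the premise $\phi\to\psi$ and once to the tautology $\phi\land\chi\to\phi$, then combine the results propositionally using the premise again. The only cosmetic difference is that the paper folds your case split into the intermediate step $I_R\phi\to I_R\psi\vee\psi\vee\phi$ (obtained from $(\ast)$ by propositional manipulation) before chaining with $(\ast\ast)$ and eliminating the extra $\phi$-disjunct via $\phi\to\psi$.
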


\begin{proof}
We have the following proof sequence in ${\bf K^{RI}}$.
\[
\begin{array}{lll}
   (1) & \phi\to\psi & \text{premise} \\
   (2) & I_R\phi\land\neg\phi\to I_R\psi\vee\psi & (1),\text{RI-R}\\
   (3) & I_R\phi\to I_R\psi\vee\psi\vee\phi & (2)\\
   (4) & \phi\land\chi\to\phi & \text{A0}\\
   (5) & I_R(\phi\land\chi)\land\neg (\phi\land\chi)\to I_R\phi\vee\phi & (4),\text{RI-R}\\
   (6) & I_R(\phi\land\chi)\land\neg(\phi\land\chi)\to I_R\psi\vee\psi\vee\phi & (3),(5)\\
   (7) & I_R(\phi\land\chi)\land\neg(\phi\land\chi)\to I_R\psi\vee\psi & (1),(6)\\
\end{array}
\]
\end{proof}

We can generalize the above result to the following.
\begin{proposition}\label{prop.admissiblerule-ri}
    The following rule is admissible: for all $n\in\mathbb{N}$,
    $$\dfrac{\chi_1\land\cdots\land\chi_n\to \phi}{I_R(\chi_1\land\psi)\land \neg(\chi_1\land\psi)\land\cdots\land I_R(\chi_n\land\psi)\land \neg(\chi_n\land\psi)\to I_R\phi\vee \phi}.$$
\end{proposition}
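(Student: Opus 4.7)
The plan is to prove the rule by induction on $n \in \mathbb{N}$, paralleling the proof of Proposition~\ref{prop.admissiblerule} for false belief, with Proposition~\ref{prop.admissible-kri} playing the role that Proposition~\ref{prop.imp} plays there. The base case $n = 0$ is immediate: the premise reduces to $\vdash \phi$, so $\vdash I_R\phi \vee \phi$ follows by propositional reasoning. The case $n = 1$ is exactly Proposition~\ref{prop.admissible-kri}.

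For the inductive step, suppose the claim holds for $m$ and we are given $\vdash \chi_1 \land \cdots \land \chi_{m+1} \to \phi$. Rewriting this as $\vdash (\chi_1 \land \chi_2) \land \chi_3 \land \cdots \land \chi_{m+1} \to \phi$ and applying the inductive hypothesis with the shortened list $(\chi_1 \land \chi_2, \chi_3, \ldots, \chi_{m+1})$, I would obtain
$$\vdash I_R((\chi_1 \land \chi_2) \land \psi) \land \neg((\chi_1 \land \chi_2) \land \psi) \land \bigwedge_{i=3}^{m+1}\bigl(I_R(\chi_i \land \psi) \land \neg(\chi_i \land \psi)\bigr) \to I_R\phi \vee \phi.$$
It then suffices to derive the first two conjuncts of this antecedent from the premises $I_R(\chi_1 \land \psi) \land \neg(\chi_1 \land \psi)$ and $I_R(\chi_2 \land \psi) \land \neg(\chi_2 \land \psi)$. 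The conjunct $\neg((\chi_1 \land \chi_2) \land \psi)$ follows propositionally from $\neg(\chi_1 \land \psi)$. For $I_R((\chi_1 \land \chi_2) \land \psi)$, I would first apply RI-Con to obtain $I_R((\chi_1 \land \psi) \land (\chi_2 \land \psi))$, and then apply RI-R to the propositional implication $(\chi_1 \land \psi) \land (\chi_2 \land \psi) \to (\chi_1 \land \chi_2) \land \psi$, which yields $I_R((\chi_1 \land \psi) \land (\chi_2 \land \psi)) \land \neg((\chi_1 \land \psi) \land (\chi_2 \land \psi)) \to I_R((\chi_1 \land \chi_2) \land \psi) \vee ((\chi_1 \land \chi_2) \land \psi)$. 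Since $\neg(\chi_1 \land \psi)$ refutes both $(\chi_1 \land \psi) \land (\chi_2 \land \psi)$ and $(\chi_1 \land \chi_2) \land \psi$, the extraneous disjunct collapses and one recovers $I_R((\chi_1 \land \chi_2) \land \psi)$, closing the induction.

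The main obstacle, compared with the cleaner proof of Proposition~\ref{prop.admissiblerule}, is precisely this regrouping step inside the argument of $I_R$. In the $W$-case the corresponding step is discharged silently by the admissible rule REW that substitutes equivalents under $W$, whereas no analogous rule for $I_R$ is stated as a primitive of ${\bf K^{RI}}$, so one cannot simply replace $(\chi_1 \land \psi) \land (\chi_2 \land \psi)$ by $(\chi_1 \land \chi_2) \land \psi$ inside $I_R$ by appeal to such a rule. My workaround is to avoid invoking a full replacement rule and instead use the one-sided RI-R once, exploiting the negation hypotheses $\neg(\chi_i \land \psi)$ already at hand to absorb the extra disjunct that RI-R introduces on the right-hand side.
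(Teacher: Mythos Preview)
Your proposal is correct and follows essentially the same inductive strategy as the paper: base cases $n=0,1$, then regroup $\chi_1\land\chi_2$ and apply the inductive hypothesis. The only difference is that where the paper's proof sequence simply cites RI-Con to pass from $I_R(\chi_1\land\psi)\land\neg(\chi_1\land\psi)\land I_R(\chi_2\land\psi)\land\neg(\chi_2\land\psi)$ directly to $I_R(\chi_1\land\chi_2\land\psi)\land\neg(\chi_1\land\chi_2\land\psi)$, you explicitly justify the regrouping inside $I_R$ via RI-R and the available negation hypotheses; this makes your argument slightly more detailed than the paper's at that step, but the approach is the same.
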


\begin{proof}
By induction on $n\in\mathbb{N}$.
The case $n=0$ is obvious. The case $n=1$ is shown as in Prop.~\ref{prop.admissible-kri}.

Now suppose that the statement holds for the case $n=m$ (IH), to show it also holds for the case $n=m+1$. For this, we have the following proof sequence:
\[
\begin{array}{lll}
   (1) & \chi_1\land\cdots\land\chi_{m+1}\to\phi & \text{premise} \\
   (2) & (\chi_1\land\chi_2)\land\cdots\land\chi_{m+1}\to\phi & (1)\\
   (3) & I_R(\chi_1\land\chi_2\land\psi)\land\neg(\chi_1\land\chi_2\land\psi)\land\cdots\land & \\
   & ~~~~~~I_R(\chi_{m+1}\land\psi)\land\neg (\chi_{m+1}\land\psi)\to I_R\phi\vee\phi & (2),\text{IH}\\
   (4) & I_R(\chi_1\land\psi)\land\neg (\chi_1\land\psi)\land I_R(\chi_2\land\psi)\land\neg (\chi_2\land\psi)\to & \\
   & ~~~~~~I_R(\chi_1\land\chi_2\land\psi)\land \neg(\chi_1\land\chi_2\land\psi) & \text{RI-Con}\\
   (5) & I_R(\chi_1\land\psi)\land\neg(\chi_1\land\psi)\land\cdots\land I_R(\chi_{m+1}\land\psi)& \\ & ~~~~~~\land\neg(\chi_{m+1}\land\psi)\to I_R\phi\vee\phi & (3),(4)\\
\end{array}
\]
\end{proof}

By Def.~\ref{def.cm} and the definability of $W$ in terms of $I_R$, we obtain the canonical model for ${\bf K^{RI}}$ as follows.
\begin{definition}\label{def.cm-kri}
The {\em canonical model for ${\bf K^{RI}}$} is $\M^c=\lr{S^c,R^c, V^c}$, where
\begin{itemize}
    \item $S^c=\{s\mid s\text{ is a maximal consistent set for }\}$
    \item if $I_R\psi\land\neg\psi\in s$ for {\em no} $\psi$, then $sR^ct$ iff $s=t$, and\\
if $I_R\psi\land\neg\psi\in s$ for {\em some} $\psi$, then $sR^ct$ iff for all $\phi$, if $I_R(\phi\land\psi)\land\neg(\phi\land\psi)\in s$, then $\phi\in t$.
\item $V^c(p)=\{s\in S^c\mid p\in s\}$.
\end{itemize}
\end{definition}

\begin{lemma}
    For all $\phi\in\mathcal{L}(I_R)$, for all $s\in S^c$, we have
    $$\M^c,s\vDash\phi\text{~iff~}\phi\in s.$$
\end{lemma}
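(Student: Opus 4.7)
The plan is to proceed by induction on $\phi$, with the Boolean cases entirely routine; all the work is in the case $I_R\phi$. The argument will mirror the truth lemma for ${\bf K^W}$, with each occurrence of a $W$-formula replaced by the canonical pattern $I_R(\cdot)\land\neg(\cdot)$ in accordance with the definability $\vDash W\phi\lra(I_R\phi\land\neg\phi)$. The essential new ingredient, absent on the ${\bf K^W}$ side, is axiom RI-Equ, which lets me toggle the polarity of $\phi$ so as to handle both halves of the disjunctive semantics of $I_R$ uniformly.

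For the ``if'' direction, assume $I_R\phi\in s$. By maximality either $\neg\phi\in s$ or $\phi\in s$. If $\neg\phi\in s$, I take $\psi:=\phi$ as the witness triggering the second clause of $R^c$. Applying RI-R to the tautology $\phi\to\phi\land\phi$ yields $I_R\phi\land\neg\phi\to I_R(\phi\land\phi)\vee(\phi\land\phi)$; since $\neg\phi\in s$ excludes the second disjunct, $I_R(\phi\land\phi)\land\neg(\phi\land\phi)\in s$. Instantiating the bound formula in the defining clause of $R^c$ to $\phi$ then delivers $\phi\in t$ for every $t$ with $sR^c t$, and combined with $\phi\notin s$ and the induction hypothesis this witnesses the first disjunct of the semantics of $I_R\phi$ at $s$. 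If instead $\phi\in s$, RI-Equ gives $I_R\neg\phi\in s$ and the symmetric argument with $\neg\phi$ in the role of $\phi$ yields $\neg\phi\in t$ for every $t$ with $sR^c t$, witnessing the second disjunct.

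For the ``only if'' direction, I argue contrapositively: assume $I_R\phi\notin s$, hence also $I_R\neg\phi\notin s$ by RI-Equ, and aim to falsify both semantic disjuncts. Split on whether $\neg\phi\in s$ or $\phi\in s$. In the first subcase I must exhibit $t$ with $sR^c t$ and $\phi\notin t$: if no $\psi$ satisfies $I_R\psi\land\neg\psi\in s$ then $t=s$ works directly; otherwise fix such a $\psi$ and, by Lindenbaum's Lemma, reduce to the consistency of $\Gamma:=\{\chi\mid I_R(\chi\land\psi)\land\neg(\chi\land\psi)\in s\}\cup\{\neg\phi\}$. Were $\Gamma$ inconsistent, there would be $\chi_1,\dots,\chi_n$ from the first set with $\vdash\chi_1\land\cdots\land\chi_n\to\phi$, so by Prop.~\ref{prop.admissiblerule-ri} we would have $I_R\phi\vee\phi\in s$, contradicting $I_R\phi\notin s$ together with $\neg\phi\in s$. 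The subcase $\phi\in s$ is the mirror image: seeking $t$ with $\phi\in t$, I show consistency of the analogous set with $\phi$ in place of $\neg\phi$, and if it failed Prop.~\ref{prop.admissiblerule-ri} would yield $I_R\neg\phi\vee\neg\phi\in s$, contradicting $I_R\neg\phi\notin s$ and $\phi\in s$.

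The main obstacle is arranging the ``only if'' direction so that a single admissible rule closes both mirror subcases. The extra disjunct ``$\vee\,\phi$'' in the conclusion of Prop.~\ref{prop.admissiblerule-ri}, which is absent from the analogous Prop.~\ref{prop.admissiblerule} for ${\bf K^W}$, is exactly what makes this possible: in tandem with RI-Equ, which lets one read ``$I_R\phi\notin s$'' equivalently as ``$I_R\neg\phi\notin s$'', it supplies the extra piece of information needed to turn each subcase into a contradiction.
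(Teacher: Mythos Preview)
Your proposal is correct and follows essentially the same route as the paper's own proof: both handle the $I_R\phi$ case by a two-way split on whether $\phi\in s$ or $\neg\phi\in s$, use the current formula itself as the witness $\psi$ in the defining clause of $R^c$ for the ``if'' direction, and for the ``only if'' direction reduce each subcase to the consistency of $\{\chi\mid I_R(\chi\land\psi)\land\neg(\chi\land\psi)\in s\}\cup\{\neg\phi\}$ (resp.\ $\{\phi\}$), invoking Prop.~\ref{prop.admissiblerule-ri} and RI-Equ to derive the contradiction. Your derivation of $I_R(\phi\land\phi)\land\neg(\phi\land\phi)\in s$ via RI-R applied to $\phi\to\phi\land\phi$ is a slightly more explicit version of the paper's ``by letting $\chi$ be $\phi$'' step, but the structure is identical.
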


\begin{proof}
    By induction on $\phi\in\mathcal{L}(I_R)$. The nontrivial case is $I_R\phi$.

    Suppose that $I_R\phi\in s$ (thus $I_R\neg\phi\in s$), to show that $\M^c,s\vDash I_R\phi$. By induction hypothesis, we show that 
\begin{itemize}
    \item[(*)] if $\phi\notin s$, then for all $x\in S^c$ such that $sR^cx$, we have $\phi\in x$, and if $\phi\in s$, then for all $y\in S^c$ such that $sR^cy$, we have $\phi\notin y$.
\end{itemize}
Firstly, we assume that $\phi\notin s$, then $\neg\phi\in s$. By supposition, $I_R\phi\land\neg\phi\in s$. Let $x\in S^c$ such that $sR^cx$. Then according to the definition of $R^c$, we have: for all $\chi$, if $I_R(\chi\land\phi)\land\neg(\chi\land\phi)\in s$, then $\chi\in x$. By letting $\chi$ be $\phi$, we can show that $\phi\in x$. A similar argument applies to the second conjunct of (*).

Conversely, suppose that $I_R\phi\notin s$ (thus $I_R\neg\phi\notin s$), to prove that $\M^c,s\nvDash\I_R\phi$. By induction hypothesis, it suffices to show the following fails:
\begin{itemize}
    \item[(a)] {\em either} $\phi\notin s$ and for all $x\in S^c$ such that $sR^cx$, we have $\phi\in x$, {\em or} $\phi\in s$ and for all $y\in S^c$ such that $sR^cy$, we have $\phi\notin y$.
\end{itemize} 
This amounts to showing the following (a1) and (a2) hold.
\begin{itemize}
    \item[(a1)] if $\phi\notin s$, then for some $x\in S^c$ such that $sR^cx$, we have $\phi\notin x$, {\em and}
    \item[(a2)] if $\phi\in s$, then for some $y\in S^c$ such that $sR^cy$, we have $\phi\in y$.
\end{itemize}
For (a1), assume that $\phi\notin s$. If $I_R\psi\land\neg\psi\in s$ for no $\psi$, then according to the definition of $R^c$, we have $sR^cs$. In this case, $s$ is a desired $x$. If $I_R\psi\land\neg\psi\in s$ for some $\psi$, by definition of $R^c$ and Lindenbaum's Lemma, it remains only to show that the set $\{\chi\mid I_R(\chi\land\psi)\land\neg(\chi\land\psi)\in s\}\cup\{\neg\phi\}$ (denoted $\Gamma$) is consistent.

If $\Gamma$ is {\em not} consistent, then there exist $\chi_1,\ldots,\chi_n$ such that $I_R(\chi_i\land\psi)\land\neg(\chi_i\land\psi)\in s$ for $i=1,\ldots,n$ and
$$\vdash \chi_1\land\cdots\land\chi_n\to\phi.$$
By Prop.~\ref{prop.admissiblerule-ri}, we infer that
$$\vdash I_R(\chi_1\land\psi)\land \neg(\chi_1\land\psi)\land\cdots\land I_R(\chi_n\land\psi)\land \neg(\chi_n\land\psi)\to I_R\phi\vee \phi.$$
As $I_R(\chi_i\land\psi)\land\neg(\chi_i\land\psi)\in s$ for $i=1,\ldots,n$, we derive that $I_R\phi\vee\phi\in s$, which contradicts the supposition and the assumption. Thus we complete the proof of (a1).

Similarly, we can prove (a2), by using $I_R\neg\phi\notin s$ and $\neg\phi\notin s$ instead.
\end{proof}

\begin{theorem}\label{thm.comp-kri}
   ${\bf K^{RI}}$ is sound and strongly complete with respect to the class of all frames.
\end{theorem}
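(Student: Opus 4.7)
The plan is the standard two-part argument: verify soundness axiom-by-axiom on arbitrary frames, and obtain strong completeness from the Truth Lemma via Lindenbaum's Lemma applied to the canonical model of Def.~\ref{def.cm-kri}.

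For soundness, A0 and MP are routine. For RI-Equ, note that the semantic clause for $I_R\phi$ is visibly symmetric under swapping $\phi$ and $\neg\phi$: both disjuncts $(R(s)\vDash\phi\text{ and }s\nvDash\phi)$ and $(R(s)\vDash\neg\phi\text{ and }s\vDash\phi)$ just swap roles, so $\M,s\vDash I_R\phi$ iff $\M,s\vDash I_R\neg\phi$. For RI-Con, suppose $\M,s\vDash I_R\phi\land\neg\phi\land I_R\psi\land\neg\psi$; then $\neg\phi$ and $\neg\psi$ hold at $s$, and the first disjunct of each $I_R$-clause must fire, giving $R(s)\vDash\phi$ and $R(s)\vDash\psi$. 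Hence $R(s)\vDash\phi\land\psi$ while $s\nvDash\phi\land\psi$, so $\M,s\vDash I_R(\phi\land\psi)$. For RI-R, assume $\vDash\phi\to\psi$ and $\M,s\vDash I_R\phi\land\neg\phi$; then $R(s)\vDash\phi$, so $R(s)\vDash\psi$. Either $\M,s\vDash\psi$ (and the conclusion holds), or $\M,s\nvDash\psi$, in which case $R(s)\vDash\psi$ together with $\M,s\nvDash\psi$ yields $\M,s\vDash I_R\psi$. In either case $\M,s\vDash I_R\psi\vee\psi$.

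For strong completeness, let $\Gamma$ be any ${\bf K^{RI}}$-consistent set of $\mathcal{L}(I_R)$-formulas. By Lindenbaum's Lemma, extend $\Gamma$ to a maximal consistent set $s\in S^c$. Build the canonical model $\M^c=\lr{S^c,R^c,V^c}$ as in Def.~\ref{def.cm-kri}; no frame property is demanded here, so $\M^c$ is simply a model in the class of all frames. The Truth Lemma proved immediately above gives $\M^c,s\vDash\phi$ iff $\phi\in s$ for every $\phi\in\mathcal{L}(I_R)$, so in particular $\M^c,s\vDash\Gamma$. Hence every consistent set is satisfiable in a model on some frame, establishing strong completeness with respect to the class of all frames.

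The main obstacle in this argument has already been discharged: the nontrivial content lies in the Truth Lemma for $I_R$, whose `only if' direction required Prop.~\ref{prop.admissiblerule-ri} and the case split inside the canonical relation. Given that lemma, the remaining work for Thm.~\ref{thm.comp-kri} is purely bookkeeping, and the soundness checks for RI-Con and RI-R are the only places where one must be careful to use $\neg\phi\in s$ (respectively $\M,s\nvDash\phi$) to force the ``$R(s)\vDash\phi$'' disjunct of the $I_R$-clause.
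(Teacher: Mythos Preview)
Your proposal is correct and follows exactly the route the paper intends: soundness by direct semantic verification of each axiom and rule, and strong completeness as the routine consequence of Lindenbaum's Lemma together with the Truth Lemma established just before the theorem. The paper in fact gives no explicit proof of Thm.~\ref{thm.comp-kri} and treats it as immediate from the Truth Lemma, so you have simply filled in the standard details.
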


\subsection{Serial and transitive logic}\label{sec.d4-radicalignornce}

In this section, we consider the extension of ${\bf K^{RI}}$ over serial and transitive frames. This is in line with the frames that the framework of~\cite{Fano:2020working} is actually based on, where the doxastic accessibility relation is serial and transitive, see Fn.~\ref{fn.remark} for the remark.

Define ${\bf KD4^{RI}}$ to be the extension of ${\bf K^{RI}}$ with the axiom RI-D ($\neg I^R\bot$) and the following axiom (denoted RI-4):
\[
\begin{array}{l}
  I_R\psi\land\neg\psi\land I_R(\phi\land\psi)\land\neg (\phi\land\psi)\to I_R((I_R\chi\land\neg\chi\to I_R(\phi\land \chi)\\
  ~~~~~~\land\neg (\phi\land\chi))\land\psi)\land
  \neg((I_R\chi\land\neg\chi\to I_R(\phi\land \chi)\land\neg (\phi\land\chi))\land\psi)\\
\end{array}
\]

Again, the above axioms RI-D and RI-4 are obtained from, respectively, axioms AD and A4 via a translation induced by the interdefinability of $W$ in terms of $I_R$.

\begin{theorem}\label{thm.comp-kd4ri}
${\bf KD4^{RI}}$ is sound and strongly complete with respect to the class of serial and transitive frames.    
\end{theorem}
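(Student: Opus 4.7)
The plan is to mirror the treatment of ${\bf KD4^W}$ in Sec.~\ref{sec.transitivelogic}, exploiting the interdefinability $\vDash W\phi \lra (I_R\phi \land \neg\phi)$. Since axioms RI-D and RI-4 are obtained from AD and A4 via this translation, soundness reduces to the soundness of the corresponding $W$-axioms: any serial and transitive model that refuted RI-D or RI-4 would, by routine unfolding, refute AD or A4 respectively (which we have already verified). So I would just carry out the substitution check, reading $W\chi$ as $I_R\chi \land \neg\chi$ throughout the semantic arguments for A4 and AD.

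For completeness, I would build the canonical model $\M^c$ for ${\bf KD4^{RI}}$ exactly as in Def.~\ref{def.cm-kri}. The truth lemma goes through verbatim with the same proof as in the minimal case (using Prop.~\ref{prop.admissiblerule-ri}), so the only new work is verifying that $R^c$ is serial and transitive.

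For seriality, I would adapt the proof of Thm.~\ref{thm.comp-d}. Given $s \in S^c$: if there is no $\psi$ with $I_R\psi \land \neg\psi \in s$, then by definition $sR^cs$. Otherwise, pick such a $\psi$ and show the set $\{\phi \mid I_R(\phi \land \psi) \land \neg(\phi \land \psi) \in s\}$ is consistent; if it were not, an application of Prop.~\ref{prop.admissiblerule-ri} with $\phi = \bot$ would yield $I_R\bot \vee \bot \in s$, contradicting RI-D.

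Transitivity is the main obstacle, and I would handle it by the same case split as in Thm.~\ref{thm.comp-k4}. Given $sR^ct$ and $tR^cu$, the two degenerate cases (where some $I_R\psi\land\neg\psi$ fails to exist in $s$ or $t$) immediately collapse one of the links to identity. In the principal case, where $I_R\psi \land \neg\psi \in s$ and $I_R\psi' \land \neg\psi' \in t$, I would assume $I_R(\phi \land \psi) \land \neg(\phi \land \psi) \in s$ and use axiom RI-4 to deduce membership in $s$ of
\[
I_R((I_R\chi\land\neg\chi\to I_R(\phi\land\chi)\land\neg(\phi\land\chi))\land\psi)\land\neg((\ldots)\land\psi),
\]
then apply $sR^ct$ to transport the conditional $I_R\psi' \land \neg\psi' \to I_R(\phi \land \psi') \land \neg(\phi \land \psi')$ into $t$, detach using $I_R\psi' \land \neg\psi' \in t$, and finally apply $tR^cu$ to conclude $\phi \in u$, as required. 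The delicate point here is making sure the bookkeeping of the negated conjunct in RI-4 lines up with the definition of $R^c$; this is why the axiom had to be formulated with both $I_R(\cdots)$ and $\neg(\cdots)$ conjuncts rather than as a literal translation of A4.
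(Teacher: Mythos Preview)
Your proposal is correct and follows essentially the same route as the paper: soundness is reduced to the validity of AD and A4 via the $W\phi \lra (I_R\phi\land\neg\phi)$ translation, and completeness is obtained by verifying that the canonical relation of Def.~\ref{def.cm-kri} is serial (via Prop.~\ref{prop.admissiblerule-ri} and RI-D) and transitive (via the three-case split, using RI-4 with $\chi$ instantiated to $\psi'$ in the principal case). The paper's argument is exactly this, so there is nothing to add.
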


\begin{proof}
For soundness, by Thm.~\ref{thm.comp-kri}, it suffices to show the validity of axioms RI-D and RI-4 over serial and transitive frames. This follows directly from the validity of AD and A4 over the frames under discussion (Thm.~\ref{thm.comp-d} and Prop.~\ref{prop.validity-transitiveaxiom}) with the definability of $W$ in terms of $I_R$.

    For completeness, define $\M^c$ w.r.t. ${\bf KD4^{RI}}$ as in Def.~\ref{def.cm-kri}. By Thm.~\ref{thm.comp-kri}, it remains only to show that $R^c$ is serial and transitive.

    For seriality, suppose that $s\in S^c$. If $I_R\psi\land\neg\psi\in s$ for no $\psi$, then according to the definition of $R^c$, we derive that $sR^cs$. If $I_R\psi\land\neg\psi\in s$ for some $\psi$, by definition of $R^c$ and Lindenbaum's Lemma, it suffices to prove that $\{\chi\mid I_R(\chi\land\psi)\land\neg(\chi\land\psi)\in s\}$ is consistent.

    Since $I_R\psi\land\neg\psi\in s$, the set $\{\chi\mid I_R(\chi\land\psi)\land\neg(\chi\land\psi)\in s\}$ is nonempty. If the set is not consistent, then there are $\chi_1,\ldots,\chi_n$ such that $I_R(\chi_i\land\psi)\land\neg (\chi_i\land\psi) \in s$ for $i=1,\ldots,n$ and
    $$\vdash \chi_1\land\cdots\land\chi_n\to \bot.$$
    By Prop.~\ref{prop.admissiblerule-ri},
    $$\vdash I_R(\chi_1\land\psi)\land\neg(\chi_1\land\psi)\land\cdots\land I_R(\chi_n\land\psi)\land\neg(\chi_n\land\psi)\to I_R\bot\vee \bot.$$
    As $I_R(\chi_i\land\psi)\land\neg(\chi_i\land\psi)\in s$ for $i=1,\ldots,n$, we conclude that $I_R\bot\vee\bot\in s$. As $\bot\notin s$, $I_R\bot\in s$. However, by axiom RI-D, $\neg I_R\bot\in s$. This contradicts the consistency of $s$.

\medskip

    For transitivity, let $s,t,u\in S^c$. Assume that $sR^ct$ and $tR^cu$, to show that $sR^cu$. We consider the following three cases.
    \begin{itemize}
        \item $I_R\psi\land\neg\psi\in s$ for no $\psi$. In this case, by definition of $R^c$ and $sR^ct$, it follows that $s=t$, thus $sR^cu$ by assumption that $tR^cu$.
        \item $I_R\psi\land\neg\psi\in t$ for no $\psi$. In this case, by definition of $R^c$ and $tR^cu$, it follows that $t=u$, thus $sR^cu$ by assumption that $sR^ct$.
        \item $I_R\psi\land\neg\psi\in s$ for some $\psi$, and $I_R\psi'\land\neg\psi'\in t$ for some $\psi'$. In this case, suppose that for any $\phi$ we have $I_R(\phi\land\psi)\land\neg(\phi\land\psi)\in s$, we need to show that $\phi\in u$. Since $I_R\psi\land\neg\psi\in s$, by supposition and axiom RI-4, we derive that $I_R((I_R\psi'\land\neg\psi'\to I_R(\phi\land \psi')\land\neg (\phi\land\psi'))\land\psi)\land  \neg((I_R\psi'\land\neg\psi'\to I_R(\phi\land \psi')\land\neg (\phi\land\psi'))\land\psi)\in s$. As $sR^ct$, it follows that $I_R\psi'\land\neg\psi'\to I_R(\phi\land \psi')\land\neg (\phi\land\psi')\in t$, thus $I_R(\phi\land \psi')\land\neg (\phi\land\psi')\in t$. As $tR^cu$, we conclude that $\phi\in u$, as desired.
    \end{itemize}
\end{proof}

\begin{remark}
    In the introduction, we note that the canonical model in~\cite{Fan:2022sane} does not apply to the transitive logic of reliable belief, thus not apply to the transitive logic of radical ignorance. Recall from~\cite[Def.~6.3]{Fan:2022sane} that the canonical model for the logic of reliable belief is defined such that $sR^ct$ iff for all $\phi$, $\neg \mathcal{R}\phi\land \neg\phi\in s$ implies $\phi\in t$. As observed in the introduction, $I_R$ is equivalent to the negation of $\mathcal{R}$. Accordingly, in the case of radical ignorance, $sR^ct$ iff for all $\phi$, $I_R\phi\land\neg\phi\in s$ implies $\phi\in t$. As the reader check, $R^c$ is not transitive. In contrast, our $R^c$ in Def.~\ref{def.cm-kri} is indeed transitive, as shown in Thm.~\ref{thm.comp-kd4ri}.
\end{remark}

\section{Conclusion and Discussions}

In this paper, we investigated the logics of false belief and radical ignorance. We proposed an almost definability schema, called `Almost Definability Schema', which guides us to find the desired core axioms for the transitive logic and the Euclidean logic of false belief, and (with other considerations) also inspires us to propose a suitable canonical relation in the construction of the canonical model for the minimal logic of false belief. The canonical relation can uniformly handle the completeness proof of various logics of false belief, including the transitive logic, thereby solving an open problem in~\cite{Steinsvold:falsebelief}. We explored the expressivity and frame definability of the logic of false belief. Moreover, due to the interdefinability of the operators of radical ignorance and false belief, we also axiomatized the logic of radical ignorance over the class of all frames and the class of serial and transitive frames. When translating the minimal logic of false belief to that of radical ignorance, we need to be cautious, since the translation only gives us an incomplete proof system, and one special axiom needed to be considered as well.

The almost definability schema is an important and useful tool in finding the suitable canoical relation and the desired core axioms for the special systems. Such usage has been made in the literature, see~\cite{Fanetal:2014,Fanetal:2015,Fan:2019,Fan:2021disjunctive}. This seems to be incomparable with other methods. We can try to extend such almost definability schema to other logics.

Coming back to the logics involved in this paper, one can explore the bisimulation notion for the logics of false belief and radical ignorance. Note that the almost definability schema is not enough for the notion of the bisimulation here, as in the case of the canonical relation. More things are needed to be taken account of. This is unlike the bisimulation of the contingency logic in the literature~\cite{Fanetal:2014}. 

Another future work is to axiomatize the logic of false belief over symmetric frames. As remarked before Sec.~\ref{sec.radicalignorance}, Almost Definability Schema also guides us to find an axiom AB, that is, $W\psi\land\neg\phi\to W((W\chi\to\neg W(\phi\land\chi))\land\psi)$ of $\mathcal{L}(W)$, which is valid over the class of symmetric frames. If we define the canonical model $\M^c$ for the system ${\bf B^W}$ (that is, the extension of ${\bf K^W}$ with the axiom AB) as in Def.~\ref{def.cm}, we can show that $\M^c$ is {\em almost} symmetric: let $s,t\in S^c$, if $sR^ct$ and $W\psi\in t$ for some $\psi$, then $tR^cs$. However, $\M^c$ is not symmetric, since if $W\psi\in t$ for no $\psi$ and $t\neq s$, then according to the definition of $R^c$, ${\sim}tR^cs$, even if $sR^ct$. Therefore, in order to axiomatize the symmetric logic of false belief, more work needs to be done.


\bibliographystyle{plain}
\bibliography{biblio2024,biblio2016}

\begin{thebibliography}{10}

\bibitem{sep-plato-theaetetus}
Sophie-Grace Chappell.
\newblock {Plato on Knowledge in the Theaetetus}.
\newblock In Edward~N. Zalta, editor, {\em The Stanford Encyclopedia of
  Philosophy}. Metaphysics Research Lab, Stanford University, {W}inter 2019
  edition, 2019.

\bibitem{Fan:2019}
J.~Fan.
\newblock Bimodal logics with contingency and accident.
\newblock {\em Journal of Philosophical Logic}, 48:425--445, 2019.

\bibitem{Fan:2021disjunctive}
J.~Fan.
\newblock A logic for disjunctive ignorance.
\newblock {\em Journal of Philosophical Logic}, 50(6):1293--1312, 2021.

\bibitem{Fan:2022sane}
J.~Fan.
\newblock Logics of (in)sane and (un)reliable beliefs.
\newblock {\em Logic Journal of the IGPL}, 30(1):78--100, 2022.

\bibitem{Fan:2022neighborhood}
J.~Fan.
\newblock Unknown truths and false beliefs: Completeness and expressivity
  results for the neighborhood semantics.
\newblock {\em Studia Logica}, 110:1--45, 2022.

\bibitem{Fan:2023}
J.~Fan.
\newblock Some notes on dyadic contingency.
\newblock {\em Journal of Logic, Language and Information}, 32(2):209--217,
  2023.

\bibitem{Fanetal:2014}
J.~Fan, Y.~Wang, and H.~van Ditmarsch.
\newblock Almost necessary.
\newblock In {\em Advances in Modal Logic}, volume~10, pages 178--196, 2014.

\bibitem{Fanetal:2015}
J.~Fan, Y.~Wang, and H.~van Ditmarsch.
\newblock Contingency and knowing whether.
\newblock {\em The Review of Symbolic Logic}, 8(1):75--107, 2015.

\bibitem{Fano:2020working}
V.~Fano and P.~Graziani.
\newblock A working hypothesis for the logic of radical ignorance.
\newblock {\em Synthese}, pages 1--16, 2020.

\bibitem{GilbertVenturi:2017}
D.~Gilbert and G.~Venturi.
\newblock Neighborhood semantics for logics of unknown truths and false
  beliefs.
\newblock {\em The Australasian Journal of Logic}, 14(1):246--267, 2017.

\bibitem{Gilbertetal:2022}
E.~and Petrolo M.~and Venturi~G. Gilbert, D.~and~Kubyshkina.
\newblock Logics of ignorance and being wrong.
\newblock {\em Logic Journal of the IGPL}, 30(5):870--885, 2022.

\bibitem{Onishietal:2005}
K.~H. Onishi and R.~Baillargeon.
\newblock Do 15-month-old infants understand false beliefs?
\newblock {\em Science}, 308(5719):255--258, 2005.

\bibitem{PanYang:2017}
T.~Pan and C.~Yang.
\newblock A logic for weak essence and strong accident.
\newblock {\em Logique et Analyse}, 238:179--190, 2017.

\bibitem{Steinsvold:falsebelief}
C.~Steinsvold.
\newblock Being wrong: Logics for false belief.
\newblock {\em Notre Dame Journal of Formal Logic}, 52(3):245--253, 2011.

\bibitem{Witczak:2022}
T.~Witczak.
\newblock A note on the intuitionistic logic of false belief.
\newblock {\em Bulletin of the Section of Logic}, 51(1):57--71, 2022.

\end{thebibliography}

\end{document}